\newtheorem{theorem}{Theorem}[section]
\newtheorem{corollary}[theorem]{Corollary}
\newtheorem{proposition}[theorem]{Proposition}
\newtheorem{conjecture}[theorem]{Conjecture}
\newtheorem{question}[theorem]{Question}
\newtheorem{problem}[theorem]{Problem}
\newtheorem{observation}{Observation}
\title{Remarks on odd colorings of graphs}
\author
{
    Yair Caro \thanks{University of Haifa-Oranim, Israel. E-mail: \texttt{yacaro@kvgeva.org.il}},
    \quad
	Mirko Petru\v{s}evski \thanks{Department of Mathematics and Informatics, Faculty of Mechanical Engineering - Skopje, Republic of North Macedonia. E-Mail: \texttt{mirko.petrushevski@gmail.com}},
	\quad
	Riste \v{S}krekovski\thanks{FMF, University of Ljubljana \& Faculty of Information Studies, Novo mesto, Slovenia. E-Mail: \texttt{skrekovski@gmail.com}}
}
\begin{document}
\maketitle

\begin{abstract}
 A proper vertex coloring $\varphi$ of graph $G$ is said to be odd if for each non-isolated vertex $x\in V(G)$ there exists a color $c$ such that $\varphi^{-1}(c)\cap N(x)$ is odd-sized. The minimum number of colors in any odd coloring of $G$, denoted $\chi_o(G)$, is the odd chromatic number. Odd colorings  were recently introduced in [M.~Petru\v{s}evski, R.~\v{S}krekovski: \textit{Colorings with neighborhood parity condition}]. Here we discuss various basic properties of this new graph parameter, characterize acyclic graphs and hypercubes in terms of odd chromatic number, establish several upper bounds in regard to degenericity or maximum degree, and pose several questions and problems.
  \end{abstract}

\medskip

\noindent \textbf{Keywords:} neighborhood, proper coloring, odd coloring, odd chromatic number.

%-----------------------------------------------------------------------------------------------------------------------
%-----------------------------------------------------------------------------------------------------------------------

\section{Introduction}

 All considered graphs are simple, finite and undirected. We follow~\cite{BonMur08} for any terminology and notation not defined here. A $k$-(vertex-)coloring of a graph $G$ is an assignment $\varphi: V(G)\to\{1,\ldots,k\}$. A coloring $\varphi$ is said to be \textit{proper} if every color class is an independent subset of the vertex set of $G$. A \textit{hypergraph} $\mathcal{H}=(V(\mathcal{H}),\mathcal{E}(\mathcal{H}))$ is a generalization of a graph, its (hyper-)edges are subsets of $V(\mathcal{H})$ of arbitrary positive size. There are various notions of (vertex-)coloring of hypergraphs, which when restricted to graphs coincide with proper graph coloring. One such notion was introduced by Cheilaris et al.~\cite{CheKesPal13}. An \textit{odd coloring} of hypergraph $\mathcal{H}$ is a coloring such that for every edge $e\in \mathcal{E}(\mathcal{H})$ there is a color $c$ with an odd number of vertices of $e$ colored by $c$. Particular features of the same notion notion (under the name \textit{weak-parity coloring}) have been considered by Fabrici and G\"{o}ring~\cite{FabGor16} (in regard to face-hypergraphs of planar graphs) and also by Bunde et al.~\cite{BunMilWesWu07} (in regard to coloring of graphs with respect to paths, i.e., path-hypergraphs).

 Of recent interest are also edge-colorings of graphs with certain parity condition required at the vertices. We refer the reader to~\cite{AtaPetSkr16, KanKatVar18, LuzPetSkr15, Pet18} for edge-colorings which require an odd number of occurrences of every color that appears at a vertex; edge-colorings with the weaker assumption that at least one color per vertex has an odd number of occurrences are studied in~\cite{Pet15}. The natural generalizations obtained by assigning a parity signature to each vertex and asking that the obtained parity condition is fulfilled by every (resp. some) color appearing at a vertex are considered in~\cite{LuzPetSkr18}. The survey~\cite{PetSkr21} deals with analogous covering aspects.

 In this paper we study certain aspects of odd colorings for graphs with respect to (open) neighborhoods, that is, the colorings of graph $G$ such that for every non-isolated vertex $x$ there is a color that occurs an odd number of times in the neighborhood $N_G(x)$. Our focus is on colorings that are at the same time proper.

As defined in~\cite{PetSkr22}, a proper coloring of a graph $G$ is {\em odd} if in the open neighborhood $N(v)$ of every non-isolated vertex $v$ a color
appears an odd number of times. Denote by $\chi_o(G)$ the minimum number of colors in any odd coloring of $G$, and call this the \textit{odd chromatic number} of $G$. Note that the obvious inequality $\chi(G)\leq\chi_o(G)$ becomes an equality whenever the graph $G$ is \textit{odd}, that is, if it has only odd vertex degrees. On the other hand, the mentioned inequality may also be strict. In fact, the ratio $\chi_o(G)/\chi(G)$ can acquire arbitrarily high values. Indeed, consider a non-empty graph $H$ and let $G$ be the graph obtained from $H$ by subdividing every edge in $E(H)$ once; that is, let $G$ be the \textit{complete subdivision} of $H$. Then $\chi_o(G)\geq\chi(H)$ whereas $\chi(G)=2$. In particular, for every $n\geq2$, the complete subdivision of $K_n$ has odd chromatic number $\geq n$ and (ordinary) chromatic number $2$.

Given a graph $G$, let $L$ be a function which assigns to each vertex $v$ of $G$ a set $L(v)$, called the \textit{list} of $v$. An odd coloring $c$ of $G$ such that $c(v)\in L(v)$ for all $v\in V(G)$ is called an \textit{odd list coloring} of $G$ with respect to $L$, or an \textit{odd $L$-coloring}, and we say that $G$ is \textit{odd $L$-colorable}. A graph $G$ is said to be \textit{odd $k$-list-colorable} if it has an odd list coloring whenever all the lists are of length $k$. Every graph is clearly odd $n$-list colorable, where $n$ is the order of $G$. The smallest value of $k$ for which
$G$ is odd $k$-list colorable is called the \textit{odd list chromatic number} (or \textit{odd choice number}) of $G$, denoted ${\rm ch}_o(G)$. Note that the obvious inequality $\chi_o(G)\leq{\rm ch}_o(G)$ can be strict. In fact, the ratio ${\rm ch}_o(G)/\chi_o(G)$ can be arbitrarily large. Indeed, for any odd positive integer $n$ it holds that ${\rm ch}_o(K_{n,n^n})=n+1$ whereas $\chi_o(K_{n,n^n})=2$.

\smallskip

The paper is organized as follows. The next section collects several basic observations for the odd chromatic number. In Section~3 we characterize acyclic graphs and hypercubes in terms of the same graph parameter. This is followed by a section on general upper bounds. At the end comes a short Section~5, where we briefly convey several ideas for possible further work on the topic of odd colorings of graphs.

%--------------------------------------------------------------------------
\section{Basic properties of the odd chromatic number}

Here we compare the behavior of the odd chromatic number to the behavior of the (ordinary) chromatic number in regard to standard graph notions and concepts. The followings observations are rather easy, but nevertheless
their further improvements will give new interesting results and certainly will help for better understanding of this new concept.

%-------------------------------------------------------------------------
\paragraph{Complexity.} We start this section with a brief discussion concerning the complexity of determining the odd chromatic number.

\begin{observation}
The problem of determining $\chi_o(G)$ for a graph $G$ is  NP-hard.
\end{observation}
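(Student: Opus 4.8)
The plan is to exhibit a polynomial-time reduction from the classical graph-coloring problem, which is NP-hard, to the problem of computing $\chi_o$. The leverage point is the fact recorded above that $\chi(G)=\chi_o(G)$ whenever $G$ is \emph{odd} (all vertex degrees odd): on such a graph any proper coloring is automatically an odd coloring, since each neighborhood $N(v)$ has odd size and therefore cannot be partitioned into color classes that are all of even size, forcing some color to occur an odd number of times in $N(v)$. Thus it suffices to transform an arbitrary graph into an odd graph without disturbing its chromatic number.

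First I would describe the gadget. Given an input graph $H$ (which, for the coloring problem, may be assumed to contain at least one edge), form $G$ by attaching a single pendant vertex to every vertex of $H$ of even degree. Each originally even-degree vertex then has its degree raised by one and so becomes odd, each originally odd-degree vertex is left untouched, and every new pendant has degree $1$; hence $G$ is odd and every vertex of $G$ is non-isolated. The construction is plainly computable in time polynomial in the size of $H$.

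Next I would verify that the transformation preserves the chromatic number. A pendant vertex imposes only the constraint that it differ in color from its unique neighbor, so any proper coloring of $H$ extends to $G$ using no additional colors as soon as at least two colors are available; since $H$ has an edge we have $\chi(H)\ge 2$, and therefore $\chi(G)=\chi(H)$. Combining this with the odd-graph identity gives $\chi_o(G)=\chi(G)=\chi(H)$, so any algorithm computing $\chi_o$ would also compute $\chi(H)$, and the NP-hardness of $\chi_o$ follows.

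The reduction is elementary, so I do not expect a single genuinely hard step; the only points that need care are confirming that the pendant attachment indeed renders all degrees odd simultaneously (in particular handling any isolated vertices of $H$, which acquire degree $1$ together with their pendants), and checking the exact equality $\chi(G)=\chi(H)$ rather than a mere inequality, so that an oracle value for $\chi_o(G)$ transfers back faithfully to the original coloring instance.
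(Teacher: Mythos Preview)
Your proposal is correct and follows essentially the same approach as the paper: attach a leaf to each vertex of even degree so that the resulting graph $G$ is odd, then use $\chi_o(G)=\chi(G)=\chi(H)$ to reduce from ordinary graph coloring. The paper states this reduction more tersely (attaching leaves only to vertices of \emph{positive} even degree, leaving isolated vertices untouched), but the underlying idea and the verification are the same.
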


\begin{proof}
Since determining the chromatic number of a graph is an NP-hard problem, the same
holds for the odd  chromatic number. Reduction is obvious, to a given graph $H$ attach a leaf to every vertex of positive even degree, and denote the resulting odd graph by $G$. Obviously, $\chi(H)=\chi(G)=\chi_o(G)$.
\end{proof}

Note that graphs with chromatic number $\le 2$ are precisely bipartite graphs, but regarding odd colorings a graph without isolated vertices has odd chromatic number $2$ if and only if it is bipartite with all vertices of odd degree.
Also notice that there is no non-empty graph with odd chromatic number exactly 1.

\paragraph{Bridges.}  Supposing a given graph has a non-trivial bridge, the usual chromatic number is the maximum of the chromatic numbers of both sides. Regarding the odd chromatic number, things are slightly different.

\begin{observation}
Suppose $G$  has a non-trivial bridge $e =uv$ with  $G_1$ and $G_2$ being the two parts of $G-e$. Then
        \begin{equation}
           \chi_o(G)  \le  \max \{3, \chi_o(G_1) , \chi_o(G_2)\}.
        \end{equation}
 \end{observation}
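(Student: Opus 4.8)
The plan is to assemble an odd coloring of $G$ from separate odd colorings of the two sides, using the slack provided by the third color. Orient the bridge so that $u\in V(G_1)$ and $v\in V(G_2)$; since $e$ is non-trivial, $u$ is non-isolated in $G_1$ and $v$ is non-isolated in $G_2$. The crucial observation is that passing from $G_1$ (resp. $G_2$) to $G$ changes the open neighborhood of a vertex only for $u$ (which gains $v$) and for $v$ (which gains $u$); every other vertex keeps exactly the neighborhood it had in its part. Consequently, if we start from odd colorings of $G_1$ and $G_2$ and only adjust how the colors of the two parts are matched up, the parity condition is automatically met at every vertex except possibly $u$ and $v$, and the only new properness constraint is $\varphi(u)\neq\varphi(v)$. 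The whole problem thus localizes to the edge $e$.

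Set $k=\max\{3,\chi_o(G_1),\chi_o(G_2)\}$ and fix odd colorings $\varphi_1$ of $G_1$ and $\varphi_2$ of $G_2$, both drawn from the palette $\{1,\dots,k\}$. Let $O_u\subseteq\{1,\dots,k\}$ be the set of colors occurring an odd number of times in $N_{G_1}(u)$; it is nonempty since $\varphi_1$ is odd and $u$ is non-isolated. In $G$ the neighborhood $N_G(u)=N_{G_1}(u)\cup\{v\}$ is obtained by inserting a single vertex, so its set of odd colors is $O_u\,\triangle\,\{\varphi(v)\}$, which is empty precisely when $O_u=\{\varphi(v)\}$. Hence the condition at $u$ survives unless $O_u$ is a singleton $\{a\}$ and $\varphi(v)=a$, so the condition at $u$ forbids at most one value of $\varphi(v)$. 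Symmetrically, writing $O_v$ for the odd-color set of $N_{G_2}(v)$, the condition at $v$ forbids $\varphi(u)$ from being the unique element of $O_v$ when $|O_v|=1$, and imposes nothing otherwise.

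To satisfy all three requirements at once I would keep $\varphi_1$ fixed, so that $\varphi(u)=:\alpha$ and the forbidden value $a$ at $u$ are determined, and recolor $G_2$ by a color permutation $\pi$, setting $\varphi|_{G_2}=\pi\circ\varphi_2$. Relabeling preserves proper oddness of the coloring on $G_2$ and merely carries $\varphi_2(v)=:\beta_0$ to $\pi(\beta_0)$ and the (possible) unique odd color $b_0$ of $N_{G_2}(v)$ to $\pi(b_0)$; note $\beta_0\neq b_0$ because $\varphi_2$ is proper, so $v$'s own color is absent from $N_{G_2}(v)$. The requirements become $\pi(\beta_0)\notin\{\alpha,a\}$ (properness and the condition at $u$) and $\pi(b_0)\neq\alpha$ (the condition at $v$). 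Since $\beta_0\neq b_0$ and $k\ge 3$, I can greedily choose $\pi(\beta_0)$ outside the at-most-two-element set $\{\alpha,a\}$, then $\pi(b_0)$ outside $\{\alpha,\pi(\beta_0)\}$, and extend $\pi$ to a permutation of $\{1,\dots,k\}$ arbitrarily. The resulting coloring is a proper odd coloring of $G$ using at most $k$ colors, giving the claimed bound.

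The main obstacle is the interaction of the three constraints around $e$: coloring $v$ controls the parity at $u$, the color $\alpha$ of $u$ controls the parity at $v$, and properness links the two. One cannot repair a violated parity at $v$ by recoloring individual vertices of $N_{G_2}(v)$ without risking their own conditions, so the only safe degree of freedom is a global relabeling of $G_2$; the argument must therefore place two independent demands, on $\pi(\beta_0)$ and on $\pi(b_0)$, at the distinct colors $\beta_0\neq b_0$. This is exactly where a third color is indispensable: with only two colors the forbidden pair $\{\alpha,a\}$ can exhaust the palette and leave no admissible color for $v$, which is why the bound reads $\max\{3,\dots\}$ rather than $\max\{2,\dots\}$.
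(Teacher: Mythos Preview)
Your proof is correct and follows essentially the same approach as the paper's: fix an odd coloring on each side and apply a global color permutation to $G_2$ so that the three constraints at the bridge---properness of $e$, the parity condition at $u$, and the parity condition at $v$---are simultaneously satisfied. The paper handles this by a short case distinction (whether $\chi_o(G_2)\ge 3$ or $\chi_o(G_1)=\chi_o(G_2)=2$) and asserts that with at least three colors one ``can easily achieve'' the desired distinctness; your argument is a more explicit version of the same step, carrying out the greedy choice of $\pi(\beta_0)$ and $\pi(b_0)$ and noting that $\beta_0\neq b_0$ by properness so that the two demands sit at distinct colors.
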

 \begin{proof}
 We may assume $ \chi_o(G_2) \ge  \chi_o(G_1)$ and that  $u$  is in $G_1$ and $v$ is in $G_2$.
 Take an odd coloring of  $G_1$ with the colors $1,\ldots,\chi_o(G_1)$  such  that the color of $u$ is $1$.
 Also, take an odd coloring of $G_2$ with the colors $1,2,\ldots,\chi_o(G_2)$ such that $v$ gets color 2.

If we have used at least three colors for the coloring of $G_2$, then we can easily achieve that the color  of $v$ is also distinct  from a color that appears an odd number of times in $N_{G_1}(u)$ and that the color of $u$ is distinct from a color that appears an odd number of times in $N_{G_2}(v)$.  And then so the constructed coloring of $G$ is odd $G$ and uses $\max \{\chi_o(G_1) , \chi_o(G_2)\}\ge 3$ colors.

Otherwise, $\chi_o(G_1) =\chi_o(G_2)=2$. Observe that then it is not possible to odd color $G$ with just two colors. But in this case
we simply alter the initial coloring of $G_2$ by  recoloring all vertices of color 2 by 3 and all vertices of color 1 by 2. The modified coloring
becomes odd for $G$.
 \end{proof}

The above bound is obviously tight for many pairs of graphs $G_1,G_2$ where at least one of them
has odd chromatic number $\ge 3$. In the case $\chi_o(G_1) =\chi_o(G_2)=2$, the above bound is tight always
whenever both $G_1$ and $G_2$ are bipartite odd graphs. Also observe that it may happen that
 $\chi_o(G)  \le  \min \{\chi_o(G_1) , \chi_o(G_2)\}$, it happens for example when $G_1$ and $G_2$ are cycles
 of particular order.

%-------------------------------------------------------------------------
\paragraph{Introducing/removing a vertex.} By introducing a new vertex to an existing graph, the (ordinary) chromatic number either stays the same or increases by $1$. There is a similar upper bound for the odd chromatic number of a graph in terms of the odd chromatic number of a vertex-deleted subgraph.

 \begin{observation}
 \label{o.newvertex}
Let $G$ be a graph without isolated vertices.  Introduce a new vertex $v$ connected to some/all vertices of $G$ to obtain a
graph $H$. Then,  $\chi_o(H) \le \chi_o(G)+2$.
\end{observation}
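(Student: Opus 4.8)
The plan is to start from an optimal odd coloring of $G$ and extend it to $H$ using two additional colors, exploiting the fact that a color used on a single vertex contributes exactly one (hence an odd number of) occurrence to each of that vertex's neighbors.

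First I would fix an odd coloring $\varphi$ of $G$ with the colors $\{1,\ldots,k\}$, where $k=\chi_o(G)$. The case where $v$ is isolated in $H$ is trivial (an isolated vertex imposes no parity constraint and may receive any existing color, giving $\chi_o(H)=\chi_o(G)$), so I may assume $v$ has at least one neighbor and fix one such $w\in N_H(v)$. I then extend $\varphi$ to a coloring $\psi$ of $H$ by keeping all old colors, and setting $\psi(v)=k+1$ and $\psi(w)=k+2$, two brand-new colors. Properness is immediate: the fresh colors $k+1$ and $k+2$ are each used on a single vertex ($v$ and $w$, respectively), so neither can clash with a neighbor, and $v,w$ receive distinct colors.

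The heart of the argument is checking the parity condition at every vertex, and the guiding principle is that a fresh color occurs uniquely, hence appears exactly once in the neighborhood of each of its neighbors. Concretely: every neighbor of $w$ other than $v$ has color $k+2$ appearing exactly once in its neighborhood and is thus satisfied; every neighbor of $v$ not already handled by the previous case has color $k+1$ appearing exactly once and is satisfied; the vertex $w$ is satisfied because $v\in N_H(w)$ carries the unique color $k+1$; and $v$ itself is satisfied because $w\in N_H(v)$ carries the unique color $k+2$ (this is precisely why we recolor a neighbor of $v$ rather than leaving $v$'s own parity to chance). Finally, any vertex adjacent to neither $v$ nor $w$ has an unchanged neighborhood and retains the odd witness color it had under $\varphi$. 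Hence $\psi$ is an odd coloring of $H$ with $k+2$ colors, giving $\chi_o(H)\le\chi_o(G)+2$.

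The step I expect to require the most care is the parity bookkeeping for vertices whose neighborhoods are altered, namely the neighbors of $v$ and of $w$, with the overlap (vertices adjacent to both) needing explicit attention. The natural worry is that inserting $v$ flips the parity of the color on which some neighbor $u$ previously relied, or that recoloring $w$ cascades and destroys the condition at other neighbors of $w$. Both worries dissolve once one observes that any such $u$ automatically acquires a \emph{unique} fresh color ($k+1$ or $k+2$) in its neighborhood, which rescues it regardless of what happened to its old witnessing color; thus no iterative correction is needed and two extra colors always suffice.
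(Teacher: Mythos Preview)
Your proof is correct and follows essentially the same approach as the paper's: take an optimal odd coloring of $G$, pick a neighbor $w$ of $v$, and assign fresh colors to $v$ and $w$. The paper's argument is terser (it omits the explicit check for neighbors of $v$ that are not neighbors of $w$, and for vertices far from both), but your more careful case analysis covers exactly the same idea.
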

 \begin{proof}
Consider an odd coloring of $G$ with $\chi_o(G)$ colors. Choose a neighbor $w$ of $v$ and give $w$ and $v$ a pair of fresh new colors.  So $v$ has the color of $w$  uniquely in $N(v)$, and $w$ has the color of $v$ uniquely in $N(w)$,  and all neighbors
of $w$ have the color of $w$ appearing uniquely in their neighborhoods.
 \end{proof}

The above bound can be attained for  $P_3$ with the new vertex  $v$ being adjacent to the ends of the path to obtain $C_5$. In view of Observation~\ref{o.newvertex}, by introducing a new vertex the odd chromatic number cannot increase by much
(at most by $2$).

\smallskip

As already observed in~\cite{PetSkr22}, the parameter $\chi_o$ is not monotonic in regard to the subgraph relation. One naturally wonders whether by removing a vertex the odd chromatic number can increase significantly.

\begin{observation}
With the notation of Observation~\ref{o.newvertex}, the difference $\chi_o(G)-\chi_o(H)$ can be arbitrarily large.
\end{observation}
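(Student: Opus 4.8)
The plan is to reuse the complete-subdivision construction from the introduction, which already produces graphs of large odd chromatic number, and then to show that attaching a single well-chosen vertex collapses the odd chromatic number down to a constant. Concretely, for an even integer $n\ge 2$ I would let $G$ be the complete subdivision of $K_n$; as recalled in the introduction, $\chi_o(G)\ge\chi(K_n)=n$. Write $V(K_n)=\{x_1,\dots,x_n\}$ for the \emph{original} vertices and let $s_{ij}$ denote the vertex subdividing the edge $x_ix_j$. Note that $G$ has no isolated vertices, so it fits the setting of Observation~\ref{o.newvertex}. Now form $H$ by introducing a new vertex $v$ joined to every subdivision vertex $s_{ij}$. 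The goal is to prove $\chi_o(H)\le 3$, whence $\chi_o(G)-\chi_o(H)\ge n-3$ can be made arbitrarily large.

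The main mechanism I would exploit is that the parity condition is vacuous at a vertex of odd degree: since the sizes of the color classes inside a neighborhood sum to the degree, an odd degree forces at least one class to be odd, so \emph{any} proper coloring already fulfills the requirement there (this is the same phenomenon behind $\chi_o=\chi$ for odd graphs). In $H$ each subdivision vertex $s_{ij}$ now has degree $3$, and, because $n$ is even, each original vertex $x_i$ has degree $n-1$, both odd; hence the only vertex that could constrain the coloring is $v$ itself. I would then exhibit the explicit $3$-coloring that assigns color $1$ to all original vertices and to $v$, color $2$ to all subdivision vertices except one distinguished vertex $s^{*}$, and color $3$ to $s^{*}$. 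This is proper because the subdivision vertices form an independent set, $v$ is non-adjacent to the original vertices, and $v$ avoids every color used on its neighbors.

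It remains to verify the parity condition; the only genuine check is at $v$. By the odd-degree remark, every $s_{ij}$ and every $x_i$ is automatically satisfied, while the neighborhood of $v$ consists of all subdivision vertices, which receive color $2$ exactly $\binom{n}{2}-1$ times and color $3$ exactly once — and a single occurrence of color $3$ is odd, independently of the parity of $\binom{n}{2}$. Thus the coloring is an odd coloring of $H$, giving $\chi_o(H)\le 3$.

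I expect the only subtle point to be the parity bookkeeping at $v$: a naive monochromatic coloring of the subdivision vertices would force color $2$ to appear $\binom{n}{2}$ times in $N(v)$, which fails precisely when $\binom{n}{2}$ is even (i.e.\ $n\equiv 0\pmod 4$). Recoloring a single subdivision vertex sidesteps this uniformly in $n$; alternatively, restricting to $n\equiv 2\pmod 4$ makes every degree of $H$ odd, so that $H$ is an odd bipartite graph and $\chi_o(H)=\chi(H)=2$, yielding the sharper gap $\chi_o(G)-\chi_o(H)\ge n-2$. Either way the difference is unbounded as $n\to\infty$, which proves the statement.
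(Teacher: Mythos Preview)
Your argument is correct and is essentially the paper's own construction: both take $G$ to be the complete subdivision of $K_n$ (so $\chi_o(G)\ge n$) and attach a new vertex $v$ to the even-degree vertices of $G$ so that the resulting $H$ is a $3$-colorable graph in which (almost) every vertex has odd degree, forcing $\chi_o(H)\le 3$. The only cosmetic difference is that the paper restricts to $n\equiv 1,2\pmod 4$ so that $H$ is genuinely an odd graph, whereas you allow all even $n$ and patch the parity condition at $v$ by recoloring a single subdivision vertex; your alternative at the end, taking $n\equiv 2\pmod 4$, is literally one of the paper's two cases.
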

\begin{proof}
Let $G$ be a bipartite graph of odd order and without isolated vertices. Note that $G$ has an odd number of vertices with even degree. Add a new vertex and connect it by an edge to every vertex in $G$ of even degree. The obtained graph $H$ is $3$-colorable and has only odd vertex
degrees. Hence $\chi_o(H) \leq 3$. In spite of this, $\chi_o(G)$ can be arbitrarily large. For example, take $G$ to be the graph obtained by subdividing once every edge of $K_n$, where $n\equiv1$ or $2\, (\text{mod }4)$; thus $G$ is bipartite of order $\binom{n+1}{2}\equiv1 \, (\text{mod }2)$ and $\chi_o(G)= n$.
\end{proof}

%-------------------------------------------------------------------------
\paragraph{Disjoint union.} Here we briefly state a partition property concerning the behavior of the odd chromatic number when $V(G)$ admits a representation as a disjoint union $A\cup B$, where the induced subgraphs on $A$ and $B$ are \textit{isolate-free}, that is, are free from isolated vertices. (On the existence of such partitions already in $2$-connected graphs see e.g.~\cite{HoyTho16}.)

\begin{observation}
\label{o.2parts}
Let $V(G)  =  A  \cup  B$ is a disjoint union, where the induced subgraphs $G[A]$ and $G[B]$ are isolate-free.  Then
\begin{equation}
\chi_o(G) \le \chi_o(G[A]) + \chi_o(G[B])\,.
\end{equation}
\end{observation}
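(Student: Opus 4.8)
The plan is to combine optimal odd colorings of the two isolate-free induced subgraphs using two \emph{disjoint} palettes of colors, one of size $\chi_o(G[A])$ and one of size $\chi_o(G[B])$. First I would fix an odd coloring $\varphi_A$ of $G[A]$ drawing its colors from a set $C_A$ with $|C_A|=\chi_o(G[A])$, and an odd coloring $\varphi_B$ of $G[B]$ drawing its colors from a set $C_B$ with $|C_B|=\chi_o(G[B])$, chosen so that $C_A\cap C_B=\emptyset$. Then I would define $\varphi$ on $V(G)=A\cup B$ by letting it agree with $\varphi_A$ on $A$ and with $\varphi_B$ on $B$. This uses exactly $\chi_o(G[A])+\chi_o(G[B])$ colors, so it suffices to verify that $\varphi$ is an odd coloring of $G$.

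Next I would check properness. Every edge of $G$ lies inside $A$, lies inside $B$, or crosses between the two parts. Edges inside $A$ are properly colored because $\varphi_A$ is proper, and likewise for edges inside $B$. For a crossing edge, one endpoint receives a color of $C_A$ and the other a color of $C_B$; since these two sets are disjoint, the endpoints get distinct colors. Hence $\varphi$ is proper.

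It then remains to check the parity condition at every vertex $x$. Since $G[A]$ and $G[B]$ are isolate-free, no vertex of $G$ is isolated, so the condition must indeed be verified everywhere. Consider $x\in A$ (the case $x\in B$ being symmetric). As $x$ is non-isolated in $G[A]$, the odd coloring $\varphi_A$ supplies a color $c\in C_A$ occurring an odd number of times in $N_{G[A]}(x)$. Now $N_G(x)$ is the disjoint union of $N_{G[A]}(x)$ and the set of neighbors of $x$ that lie in $B$, and the latter all receive colors of $C_B$, which is disjoint from $C_A$. Consequently $c$ occurs exactly as often in $N_G(x)$ as in $N_{G[A]}(x)$, namely an odd number of times, so $c$ witnesses the parity condition at $x$.

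This construction has no genuine obstacle; the single point requiring care is that the crossing edges must not disturb the parity witnesses, and using two disjoint palettes guarantees precisely this, since a color of $C_A$ can be contributed only by neighbors in $A$ and a color of $C_B$ only by neighbors in $B$. The isolate-freeness hypothesis is exactly what ensures that each vertex possesses a same-part neighborhood in which an odd-occurring color is available, which is the only property of the parts that the argument relies on.
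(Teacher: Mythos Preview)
Your argument is correct and follows exactly the same approach as the paper: take optimal odd colorings of $G[A]$ and $G[B]$ on disjoint palettes, then observe that properness and the parity condition survive because crossing edges only carry colors from the other palette. The paper's proof is simply a terser version of what you wrote.
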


\begin{proof}
Color the vertices in $A$ with a set of $\chi_o(G[A])$ colors so as to obtain an (optimal) odd coloring of $G[A]$. Using a disjoint set of $\chi_o(G[B])$ colors, color the vertices in $B$ so as to produce an odd coloring of $G[B]$. Since $A$ and $B$ are isolate-free, the constructed coloring of $G$ is odd and we have used $\chi_o(G[A]) + \chi_o(G[B])$ colors in total.
\end{proof}
Notice that the bound is sharp, e.g. it is achieved by $C_4$.
From Observation~\ref{o.2parts} we immediately have the following for the join operation $G \vee H$ when $G$ and $H$ are isolate-free:

\begin{observation}
If $G$ and $H$ are nontrivial connected graphs then
\begin{equation}
\chi(G) + \chi(H) \le \chi_o(G\vee H) \le \chi_o(G) + \chi_o(H)\,.
\end{equation}
\end{observation}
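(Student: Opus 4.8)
The plan is to handle the two inequalities separately, since both turn out to be immediate consequences of facts already at hand. For the upper bound I would simply invoke Observation~\ref{o.2parts} applied to the partition $V(G\vee H) = A \cup B$ given by $A = V(G)$ and $B = V(H)$. The only thing to verify is that this is a legitimate instance of that observation, namely that the induced subgraphs $(G\vee H)[A]$ and $(G\vee H)[B]$ are isolate-free. Since the join adds only edges between $A$ and $B$ and introduces no new edges inside either part, we have $(G\vee H)[A] = G$ and $(G\vee H)[B] = H$; as $G$ and $H$ are nontrivial connected graphs, neither has an isolated vertex. Observation~\ref{o.2parts} then yields $\chi_o(G\vee H) \le \chi_o(G) + \chi_o(H)$ directly.

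For the lower bound I would argue that every odd coloring is in particular a proper coloring, so it suffices to bound from below the number of colors in an \emph{arbitrary} proper coloring of $G\vee H$. Fix such a coloring $\varphi$. Because every vertex of $A = V(G)$ is adjacent to every vertex of $B = V(H)$ in the join, the set of colors appearing on $A$ is disjoint from the set appearing on $B$. The restriction of $\varphi$ to $A$ is a proper coloring of $G$ and hence uses at least $\chi(G)$ colors, and likewise the restriction to $B$ uses at least $\chi(H)$ colors; by disjointness $\varphi$ uses at least $\chi(G)+\chi(H)$ colors in total. Taking $\varphi$ to be an optimal odd coloring then gives $\chi_o(G\vee H) \ge \chi(G) + \chi(H)$. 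Equivalently, this is the classical identity $\chi(G\vee H) = \chi(G) + \chi(H)$ combined with the inequality $\chi \le \chi_o$ recorded in the introduction.

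There is no genuine obstacle here: both bounds rest entirely on already-established material, so I expect the final write-up to span only a few lines. The one subtlety worth flagging is that the lower bound uses nothing about the parity condition beyond the fact that an odd coloring is proper, which is precisely why the left-hand side of the claimed chain features the ordinary chromatic numbers $\chi(G)$ and $\chi(H)$ rather than their odd counterparts $\chi_o(G)$ and $\chi_o(H)$.
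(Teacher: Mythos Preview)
Your proposal is correct and follows the same route as the paper: the upper bound is exactly the application of Observation~\ref{o.2parts} to the partition $V(G)\cup V(H)$, and the lower bound is the standard identity $\chi(G\vee H)=\chi(G)+\chi(H)$ together with $\chi\le\chi_o$. The paper's own proof is a single line (``Consider $V(G)$ and $V(H)$ as the parts in the partition of $V(G\vee H)$''), leaving the lower-bound reasoning implicit; your write-up spells out those details but adds nothing substantively different.
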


\begin{proof}
Consider $V(G)$ and $V(H)$ as the parts in the partition of $V(G\vee H)$.
\end{proof}

%-------------------------------------------------------------------------

\paragraph{Cartesian product.} Next we consider the behavior of the odd chromatic number under taking cartesian products. Regarding the usual coloring of cartesian product graphs, there is a beautiful relation, see~\cite{Viz63,Abe64}, which claims
\begin{equation}
\label{eq.cartesian_bound}
\chi(G\Box H)= \max \{ \chi(G) ,\chi(H)\}.
\end{equation}
The analogous equality for the odd chromatic number is not true in general. For example, $K_2\Box Q_3=Q_4$ and $\chi_o(K_2)=\chi_o(Q_3)=2$ whereas $\chi_o(Q_4)=4$ (cf. Theorem~\ref{hypercube} in the next section). For the odd chromatic number of a cartesian product we have the following
much weaker bound.
 \begin{observation}
 If $G,H$ are connected nontrivial graphs, then
      \begin{equation}
             \label{in.cartesian_bound}
              \chi_o(G \Box H)  \le \min\{\chi(G)\cdot\chi_o(H), \chi_o(G)\cdot\chi(H)\} \le \chi_o(G) \cdot \chi_o(H)\,.
      \end{equation}
 \end{observation}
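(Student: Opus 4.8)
The plan is to prove the left-hand bound $\chi_o(G\Box H)\le\chi(G)\cdot\chi_o(H)$ and then obtain everything else for free: the symmetric bound $\chi_o(G\Box H)\le\chi_o(G)\cdot\chi(H)$ follows by interchanging the roles of $G$ and $H$, so the minimum of the two also bounds $\chi_o(G\Box H)$ from above; and the rightmost inequality is immediate from $\chi(G)\le\chi_o(G)$ and $\chi(H)\le\chi_o(H)$. Throughout I would use that $G$ and $H$, being nontrivial and connected, have no isolated vertices, so every vertex of $G\Box H$ is non-isolated and the odd condition must be verified everywhere.

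To realize the left-hand bound, first I would fix a proper coloring $\alpha$ of $G$ with $\chi(G)$ colors and an odd coloring $\beta$ of $H$ with $\chi_o(H)$ colors, and color $G\Box H$ by the product rule $\gamma(g,h)=(\alpha(g),\beta(h))$, which uses at most $\chi(G)\cdot\chi_o(H)$ colors. Checking that $\gamma$ is proper is routine: two adjacent vertices of $G\Box H$ agree in exactly one coordinate and are adjacent in the corresponding factor, so $\gamma$ separates them in the other coordinate by properness of $\alpha$ or of $\beta$.

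The substance lies in the odd condition at an arbitrary vertex $(g,h)$. I would split its neighborhood into the $H$-fiber $\{g\}\times N_H(h)$ and the $G$-fiber $N_G(g)\times\{h\}$ and count occurrences of a well-chosen color. Since $\beta$ is odd, pick a color $b^\ast$ for which the number of $h'\in N_H(h)$ with $\beta(h')=b^\ast$ is odd, and then test the color $(\alpha(g),b^\ast)$. Along the $H$-fiber the first coordinate is constantly $\alpha(g)$, so this color occurs there exactly that (odd) number of times.

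The hard part --- though it turns out to be short --- is ensuring the $G$-fiber does not ruin this parity. Every $G$-fiber neighbor has first coordinate $\alpha(g')$ for some $g'\in N_G(g)$, and properness of $\alpha$ forces $\alpha(g')\ne\alpha(g)$; hence the color $(\alpha(g),b^\ast)$ cannot occur among the $G$-fiber neighbors at all. The total count of $(\alpha(g),b^\ast)$ in $N_{G\Box H}(g,h)$ therefore equals the odd $H$-fiber count, so $\gamma$ satisfies the odd condition at $(g,h)$. The one point needing care is exactly this: the odd color must be selected so that the other fiber cannot interfere, and it is properness of $\alpha$ in the first coordinate that guarantees the two fibers contribute to disjoint sets of colors there.
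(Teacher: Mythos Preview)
Your proof is correct and follows essentially the same approach as the paper: both construct the product coloring $(g,h)\mapsto(\alpha(g),\beta(h))$ from a proper coloring of $G$ and an odd coloring of $H$, split the neighborhood of $(g,h)$ into its $G$-fiber and $H$-fiber, and use properness of $\alpha$ to see that the two fibers receive disjoint color sets, so that an odd color on the $H$-fiber (supplied by $\beta$) stays odd in the full neighborhood. Your write-up is slightly more explicit about which color to track, but the argument is the same.
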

\begin{proof}
By symmetry, it suffices to prove that $\chi_o(G \Box H)\le \chi(G)\cdot\chi_o(H)$.
Let $g$ be a proper coloring of $G$  with $\chi(G)$ colors, and let $h$ be an odd coloring of $H$  with $\chi_o(H)$ colors that are all distinct from those of $G$.
To any vertex  $v = (u,w)\in G \Box H$,  we assign the pair $f(v) =  f(u,w)  = (g(u),h(w))$. We prove that $f$ is an odd coloring of $G \Box H$ with $\chi(G)\cdot\chi_o(H)$ colors.

Consider the neighborhood $N(v)$. It is comprised of vertices of two kinds: either of the form $(u,w')$ where $w'\in N_H(w)$,  or of the form $(u',w)$ where $u'\in N_G(u)$.
Since the colorings $g$ and $h$ are proper, the set of colors used for the vertices of the first kind is disjoint from the set of colors used for the vertices of the second kind. So it suffices to observe that a color has an odd number of occurrences on the vertices of the first kind, as $h$ is an odd coloring of $H$.
Thus $f$ is indeed an odd coloring of $G\Box H$ with $\chi(G)\cdot\chi_o(H)$ colors.
\end{proof}

The above bound is attained for $G=H=K_2$. For an infinite family of graphs $G$ such that $\chi_o(G \Box K_2)= \chi_o(G) \cdot \chi_o(K_2)$ see Theorem~\ref{hypercube} and the after remark in the next section. We are optimistic regarding the following problem.

\begin{problem}
Improve the bounds of (\ref{in.cartesian_bound}) to a bound comparable to the right side of (\ref{eq.cartesian_bound}).
\end{problem}

%-------------------------------------------------------------------------
\paragraph{Domination.} We have already mentioned in the introduction that the ratio $\chi_o(G)/\chi(G)$ can be arbitrarily large. Consequently, so can the difference $\chi_o(G)-\chi(G)$. Nevertheless, by using the monotonicity of the (ordinary) chromatic number, we can easily show that $\chi_o(G)-\chi(G)$ never exceeds the total domination number $\gamma_t(G)$.

\begin{observation}
Every graph $G$ satisfies
       \begin{equation}
           \label{b.t}
            \chi_o(G) \le \gamma_t(G) + \chi(G)\,.
       \end{equation}
\end{observation}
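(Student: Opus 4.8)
The plan is to exploit the combinatorial role of a total dominating set. Recall that a set $S\subseteq V(G)$ is \emph{totally dominating} precisely when every vertex of $G$ has a neighbor inside $S$, i.e.\ $N(v)\cap S\neq\emptyset$ for all $v\in V(G)$ (including the vertices of $S$ themselves). This is exactly the condition that lets a single \emph{private} color per dominator enforce the parity requirement everywhere. So I would fix a minimum total dominating set $S$ with $|S|=\gamma_t(G)$ (assuming $G$ has no isolated vertex, as otherwise $\gamma_t(G)$ is infinite and the inequality is vacuous), and build an odd coloring that spends $\chi(G)$ colors on $V(G)\setminus S$ and one brand-new color on each vertex of $S$.

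Concretely, I would first properly color the induced subgraph $G[V(G)\setminus S]$; by the monotonicity of the ordinary chromatic number this needs at most $\chi(G[V(G)\setminus S])\le\chi(G)$ colors. Then I assign to the vertices of $S$ a set of $\gamma_t(G)$ pairwise distinct fresh colors, disjoint from those used on $V(G)\setminus S$. The resulting coloring $\varphi$ is proper: no edge inside $V(G)\setminus S$ is monochromatic, since we started from a proper coloring there; no edge inside $S$ is monochromatic, since the vertices of $S$ receive pairwise distinct colors; and no edge between $S$ and $V(G)\setminus S$ is monochromatic, since the two palettes are disjoint.

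It then remains to verify the parity condition and to count. For any non-isolated vertex $v$, total domination supplies a neighbor $s\in N(v)\cap S$. Since the color of $s$ is private --- it is borne by no other vertex of $G$ --- it occurs exactly once, hence an odd number of times, among the vertices of $N(v)$. Thus $\varphi$ is an odd coloring, and it uses at most $\chi(G)+\gamma_t(G)$ colors, giving $\chi_o(G)\le\gamma_t(G)+\chi(G)$.

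Because the construction is so direct, there is no real computational obstacle; the only points demanding care are conceptual. First, one must notice that a \emph{total} dominating set, rather than an ordinary dominating set, is exactly what guarantees a dominator inside the open neighborhood $N(v)$ of every vertex --- including the vertices of $S$ themselves, which must also satisfy the parity condition --- which is precisely what an odd coloring references. Second, one should dispose of graphs with isolated vertices separately, noting that $\gamma_t$ is then undefined (infinite) and the bound holds trivially, while isolated vertices impose no parity constraint anyway.
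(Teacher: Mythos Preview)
Your proof is correct and follows essentially the same approach as the paper's own argument: take a minimum total dominating set $S$, give each of its vertices a private fresh color, and properly color $G-S$ with at most $\chi(G)$ further colors. Your write-up is in fact more careful than the paper's, explicitly verifying properness across all three edge types and handling the isolated-vertex case.
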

\begin{proof}
Color each vertex from an optimal total dominating set $S$ with a new color. To the remaining graph $G-S$ apply a proper coloring by new
$\chi(G-S)\le \chi(G)$ colors. This way we use at most $\gamma_t(G)+\chi(G)$ colors to properly color $G$
and each vertex of $G$ sees some color uniquely, namely a color of a vertex of $S$ by which it is dominated.
\end{proof}

In fact, the total domination number $\gamma_t(G)$  in (\ref{b.t}) can be replaced by the so-called \textit{even domination number}, $\gamma_{e}(G)$, defined as follows. We say that $S\subseteq V(G)$  is an \textit{even dominating set},  if  for every vertex  $v$ of positive even degree in $G$, $N_G(v)\cap S\neq \emptyset$. Note that if $v\in S$ is of positive even degree in $G$ then there is another vertex in $S$ (not necessarily of even degree) that dominates it. Let $\gamma_{e}(G)$ be the minimum size of an even dominating set. Observe that $\gamma_{e}(G)\leq\gamma_t(G)$ and also $\gamma_{e}(G)\leq n_e(G)$, where $n_e(G)$ is the number of non-isolated vertices of even degree in $G$. To our knowledge this domination number was not defined
before and it can be of independent interest.

\begin{observation}
    \label{domination}
Every graph $G$ satisfies
\begin{equation}
     \chi_o(G) \leq\gamma_{e}(G) + \chi(G) \leq \min\{n_e(G)+\chi(G), \gamma_{t}(G) +\chi(G)\}\,.
\end{equation}
\end{observation}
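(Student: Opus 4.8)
The plan is to imitate the proof of the preceding observation almost verbatim, with the total dominating set replaced by an even dominating set, the whole point being that vertices of odd degree need no attention at all. The right-hand inequality is immediate: I would simply invoke the two relations $\gamma_e(G)\le\gamma_t(G)$ and $\gamma_e(G)\le n_e(G)$ recorded just before the statement, add $\chi(G)$ throughout, and take the minimum. Each of these two relations is a one-line check anyway: a total dominating set is in particular an even dominating set, and picking a single neighbor of each non-isolated even-degree vertex yields an even dominating set of size at most $n_e(G)$.

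For the substantive inequality $\chi_o(G)\le\gamma_e(G)+\chi(G)$, I would fix an optimal even dominating set $S$ with $|S|=\gamma_e(G)$, give each vertex of $S$ a private fresh color, and then properly color $G-S$ with a disjoint palette of $\chi(G-S)\le\chi(G)$ colors. As in the total-domination argument, this is a proper coloring of $G$: edges inside $S$ join distinct colors, edges inside $G-S$ are handled by the proper coloring there, and edges across the cut are fine since the two palettes are disjoint. The number of colors used is $\gamma_e(G)+\chi(G-S)\le\gamma_e(G)+\chi(G)$, as required.

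What remains is to check that the coloring is \emph{odd}, and this is where the single new idea enters. For a non-isolated vertex $v$ of odd degree the parity condition holds automatically in \emph{any} coloring: the color-class sizes within $N(v)$ sum to the odd number $|N(v)|$, so they cannot all be even, whence some color occurs an odd number of times in $N(v)$. For a vertex $v$ of positive even degree the defining property of $S$ supplies a neighbor $s\in S$; since $s$ carries a color held by no other vertex of $G$, that color appears exactly once, hence oddly, in $N(v)$. Thus every non-isolated vertex meets the odd condition.

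I do not expect a real obstacle here; the content is the total-domination bound with one observation bolted on. The only point that deserves a careful word is the justification that it suffices to dominate the even-degree vertices, namely the automatic parity at odd-degree vertices; and one should note that an even-degree vertex lying in $S$ is itself covered, which is exactly the remark following the definition of $\gamma_e(G)$ that such a vertex is dominated by some \emph{other} vertex of $S$.
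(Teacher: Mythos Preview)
Your proposal is correct and follows essentially the same approach as the paper: choose a minimum even dominating set $S$, assign each vertex of $S$ a distinct new color, properly color $G-S$ with at most $\chi(G)$ further colors, and then observe that only non-isolated even-degree vertices need the domination from $S$ while odd-degree vertices are automatically fine. Your write-up is in fact slightly more explicit than the paper's, in that you spell out the properness check across the cut and justify the auxiliary inequalities $\gamma_e(G)\le\gamma_t(G)$ and $\gamma_e(G)\le n_e(G)$ rather than merely invoking them.
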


\begin{proof}
Let $S$ be a total even dominating set of size $\gamma_{e}(G)$  and color all the vertices of $S$ with distinct colors. Consider  $H = G - S$. Since $\chi(H)\leq\chi(G)$, properly color $H$ with at most $\chi(G)$ colors. The constructed coloring of $G$  is odd.
Indeed, properness is clear. As for odd occurrence of a color in each non-empty vertex neighborhood, we need to care only for (non-isolated) vertices $v$ of even degree.  Such a vertex $v$ is dominated by a vertex from $S$, which has a unique color.  Altogether we use at most $\gamma_{e}(G) + \chi(G)$ colors. Now the second inequality follows by the remarks stated
in front of this observation.
\end{proof}

The obtained bound is achieved for even stars $K_{1,2m}$  as $\gamma_{e}(K_{1,2m})= 1$ (while $\gamma_t(K_{1,2m}) =2$)  and the bound $\chi_o(K_{1,2m})=  \gamma_{e}(K_{1,2m}) +\chi(K_{1,2m}) = 3$ is sharp.
Also for odd graphs $G$ we have $\gamma_{e}(G) = 0$  and $\chi_o(G) = \chi(G)$.

%\newpage

%---------------------
\section{Characterizations in terms of $\chi_o$}

We begin this section by observing that paths and cycles are easily characterized in terms of their odd chromatic number. It is readily checked that for paths holds:

\begin{equation}
    \label{paths}
\chi_o(P_n)=
\begin{cases}
n & \text{\quad if \,} n\leq2 \,;\\
3 & \text{\quad if \,} n\geq3 \,.
\end{cases}
\end{equation}

\noindent And similarly, for cycles we have:

\begin{equation}
    \label{cycles}
\chi_o(C_n)=
\begin{cases}
3 & \text{\quad if \,} 3 \mid n \,;\\
4 & \text{\quad if \,} 3 \nmid n \text{ and } n\neq5\,;\\
5 & \text{\quad if \,} n=5 \,.
\end{cases}
\end{equation}

\noindent Let us now characterize trees.

\begin{proposition}
    \label{trees}
If $T$ is a non-trivial tree then
\begin{equation*}
\chi_o(T)=
\begin{cases}
2 & \text{\quad if \,} T \text{ is odd} \,;\\
3 & \text{\quad otherwise}\,.
\end{cases}
\end{equation*}
\end{proposition}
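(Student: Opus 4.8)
The plan is to split the statement into a lower-bound part, handled by an earlier characterization, and an upper-bound part, handled by an explicit construction. A nontrivial tree $T$ is connected, hence isolate-free, and bipartite, so $\chi(T)=2$. I would first invoke the fact recorded earlier that an isolate-free graph has $\chi_o=2$ exactly when it is bipartite with all degrees odd. If $T$ is odd this immediately gives $\chi_o(T)=2$. If $T$ is not odd, then $T$ has a vertex of positive even degree, so $\chi_o(T)\neq 2$; combined with $\chi_o(T)\geq\chi(T)=2$ this yields $\chi_o(T)\geq 3$. Hence everything reduces to establishing the uniform bound $\chi_o(T)\leq 3$ for every nontrivial tree.

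For this upper bound I would induct on $|V(T)|$ by deleting a leaf. The base case $T=P_2$ is immediate (color the two endpoints $1$ and $2$). For $|V(T)|\geq 3$, choose a leaf $\ell$ and let $v$ be its neighbor; then $T'=T-\ell$ is again a nontrivial tree and, by induction, admits an odd coloring with colors from $\{1,2,3\}$. Keep this coloring and reinstate $\ell$. Since $\ell$ is adjacent only to $v$, no neighborhood except $N(v)$ is altered, so the parity condition at every vertex other than $v$ is inherited from $T'$, while $\ell$ itself, being of degree $1$, sees the color of $v$ exactly once. Thus only the condition at $v$ must be re-established, and I am free to assign $\ell$ either of the two colors distinct from the color of $v$, each keeping the coloring proper.

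The crux is that one of these two choices always works, which is a short parity computation. Writing $c$ for the color of $v$ and $a,b$ for the remaining two colors, let $p$ and $q$ be the numbers of neighbors of $v$ in $T'$ colored $a$ and $b$. Giving $\ell$ color $a$ fails to satisfy the odd condition at $v$ only when $p$ is odd and $q$ is even, whereas giving $\ell$ color $b$ fails only when $p$ is even and $q$ is odd; these two events are mutually exclusive, so at least one assignment succeeds and the induction closes. I do not anticipate a real obstacle here: the whole argument rests on the observation that attaching a pendant leaf perturbs a single neighborhood, together with this two-line parity case analysis, and the only care needed is in treating the smallest trees as base cases.
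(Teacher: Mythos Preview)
Your argument is correct. The lower-bound half matches the paper verbatim, and your parity computation for the inductive step is sound: coloring $\ell$ by $a$ fails only when $(p,q)$ has parity (odd, even), while coloring by $b$ fails only for parity (even, odd), and these cannot both occur.

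The paper uses a different inductive step. Instead of deleting a single leaf, it picks a longest path, takes the penultimate vertex $v$ (whose neighbors other than the next vertex $w$ on the path are all leaves), and deletes the entire set $S=N_T(v)\setminus\{w\}$ at once. After inducting on $T-S$ with $v$ colored $1$ and $w$ colored $2$, every vertex of $S$ is given the third color $3$. This sidesteps any parity case analysis, since in $N_T(v)$ the color $2$ then appears exactly once, while your approach trades the structural step of locating an internal leaf for a short two-case parity check at a single arbitrary leaf. Both routes are equally elementary; the paper's version has the mild advantage that the extension is canonical (always use the third color), whereas yours has the mild advantage that the leaf to be removed is arbitrary.
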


\begin{proof}
 Note that $T$ is odd if and only if $\chi_o(T)=\chi(T)=2$, since $T$ is non-trivial and bipartite. It remains to prove that every tree admits an odd $3$-coloring. This is easily seen to hold true if $T$ is a star. For general trees we induct on the order $n(T)$. Let $T$ be a tree that is not a star. Consider a maximum path $P$ in $T$. Let $v$ and $w$ be, respectively, the second and third vertex of a traversal of $P$. Then the set $S=N_T(v)\backslash\{w\}$ consists of leaves (i.e. $v$ is an internal leaf of $T$). By induction, there exists an odd $3$-coloring of $T-S$. We may assume that $v$ and $w$ are colored by $1$ and $2$, respectively. Extend to $T$ by using the color $3$ for $S$.
\end{proof}

\smallskip

\noindent\textbf{Remark.} The proof of Proposition~\ref{trees} can be easily modified into proving that the odd list chromatic number of any tree equals its odd chromatic number. Namely, it amounts to observing the following: if all but one leaf of a star is already colored in a way that the obtained partial coloring is odd, then this coloring extends to the remaining vertex provided that we have a choice of three colors for it.

\medskip

Next we give an analogous characterization for hypercubes. It turns out that the value $\chi_o(Q_n)$ depends solely on the parity of $n$. Our findings are summarized in the following.

\begin{theorem}
    \label{hypercube}
If $Q_n$ is the $n$-dimensional cube then

\begin{equation*}
\chi_o(Q_n)=
\begin{cases}
2 & \text{\quad if \,} n \text{ is odd} \,;\\
4 & \text{\quad if \,} n \text{ is even}\,.
\end{cases}
\end{equation*}
\end{theorem}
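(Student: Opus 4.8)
The plan is to split into the two parities and handle the lower and upper bounds separately. For $n$ odd, the cube $Q_n$ is an odd graph (every vertex has degree $n$, which is odd) and is bipartite, so by the remark following the definition of $\chi_o$ in the introduction, any proper $2$-coloring is automatically odd; the standard parity-of-coordinate-sum $2$-coloring works, giving $\chi_o(Q_n)=2$. The substance of the theorem is therefore the even case, where I must show both $\chi_o(Q_n)\ge 4$ and $\chi_o(Q_n)\le 4$ for all even $n$.

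For the lower bound when $n$ is even, I would argue by contradiction that no odd coloring uses only $2$ or $3$ colors. A $2$-coloring is ruled out immediately: $Q_n$ is connected and non-empty with all even degrees, so by the introductory remark ($\chi_o=2$ forces all degrees odd) we cannot have $\chi_o(Q_n)=2$. Ruling out $3$ colors is the delicate part. Here I would exploit the rich automorphism/recursive structure $Q_n=K_2\,\Box\,Q_{n-1}$ together with a local parity count. Fix a vertex $x$; its neighborhood $N(x)$ is an antichain of $n$ vertices (the $n$ coordinate-flips of $x$), all lying in the opposite bipartition class from $x$. In a proper $3$-coloring, $x$ gets one color and its $n=|N(x)|$ even-many neighbors are distributed among the remaining two colors, so for the coloring to be odd at $x$ one of those two color classes must meet $N(x)$ in an odd number of vertices. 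The idea is to show that this parity requirement, demanded simultaneously at every vertex, forces a global contradiction — e.g. by summing the relevant parities over a suitable set of vertices (a whole bipartition class, or a sub-cube) and obtaining a contradiction mod $2$, using that $n$ is even.

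For the upper bound when $n$ is even, I would construct an explicit odd $4$-coloring. The natural candidate is a coloring built from two coordinates: write a vertex as a binary string and use the pair $(\text{parity of the full coordinate-sum},\ x_1)$, i.e.\ the product of a proper $2$-coloring with the value of the first coordinate, yielding $4$ colors. Properness is automatic because the first factor already separates adjacent vertices. To check oddness at a vertex $x$, observe that flipping coordinate $1$ changes both factors, while flipping any of the other $n-1$ coordinates changes only the parity factor; counting how the $n$ neighbors split among the $4$ classes and using $n-1$ odd (since $n$ is even) should exhibit a color occurring an odd number of times in $N(x)$. Since $K_2\,\Box\,Q_{n-1}=Q_n$ and $Q_{n-1}$ is odd-dimensional, an alternative route for the upper bound is to combine the observation bound $\chi_o(G\,\Box\,H)\le\chi(G)\cdot\chi_o(H)$ with $\chi(K_2)=2$ and $\chi_o(Q_{n-1})=2$, which yields $\chi_o(Q_n)\le 4$ directly and gives the promised example for tightness of the Cartesian-product bound.

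I expect the main obstacle to be the exclusion of odd $3$-colorings in the even case: unlike the $2$-color obstruction, which follows from a one-line degree-parity observation, ruling out $3$ colors requires a genuinely global parity argument, and the challenge is to package the per-vertex odd-neighborhood conditions into a single counting identity over $Q_n$ that collapses to $0\equiv 1\pmod 2$ precisely when $n$ is even.
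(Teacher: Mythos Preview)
Your treatment of the odd case and of the upper bound for even $n$ is fine and matches the paper: the Cartesian-product route $\chi_o(Q_n)\le\chi_o(K_2)\cdot\chi_o(Q_{n-1})=4$ is exactly what the paper uses, and your explicit $4$-coloring $(\text{parity of sum},\,x_1)$ is a correct alternative (among the $n$ neighbors of any $x$, exactly one has the second coordinate flipped and $n-1$ do not, and $n-1$ is odd).

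The gap is the lower bound for even $n$. You correctly flag it as the crux, but your plan---``sum the relevant parities over a bipartition class or a sub-cube and obtain $0\equiv1\pmod 2$''---supplies no mechanism: you do not say what quantity is being summed, nor why evenness of $n$ forces the contradiction. The paper's argument is not global but \emph{local}, hinging on the $4$-cycle structure of the hypercube around a single vertex. First sharpen your own observation: since $|N(v)|=n$ is even, in a putative odd $3$-coloring \emph{both} colors $\neq c(v)$ occur an odd number of times in $N(v)$. View $Q_n$ as two copies of $Q_{n-1}$ joined by a perfect matching; pick $v$ in the first copy with $c(v)=1$ and its ally $\bar v$ colored $2$. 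Within $N_{Q_{n-1}}(v)$ the color-$2$ set $U$ is then even (the ally has been removed) and the color-$3$ set $W$ is odd. Now use the key hypercube property: any two vertices of $N(v)$ have a unique common neighbor in the second neighborhood $N_2(v)$, and every $z\in N_2(v)$ has exactly two neighbors in $N(v)$. For each $w\in W$ the ally $\bar w$ is adjacent to $\bar v$, hence colored $1$; so $w$ already sees two color-$1$ neighbors ($v$ and $\bar w$) and must have an odd number of further color-$1$ neighbors among $N_2(v)$. Counting edges between $W$ and the set $V$ of color-$1$ vertices in $N_2(v)$ from the $W$ side gives an odd total; counting from the $V$ side (each $z\in V$ sends exactly two edges into $U\cup W$) shows that the number $|V_{23}|$ of $z$'s with one neighbor in $U$ and one in $W$ is odd. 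But every pair $(u,w)\in U\times W$ determines a unique $4$-cycle $v,u,z,w$ whose fourth vertex $z$ must be colored $1$, hence lies in $V_{23}$; this bijection gives $|V_{23}|=|U|\,|W|$, which is even since $|U|$ is even. That double count is the missing idea, and nothing in your outline approaches it.
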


\begin{proof}
The case of odd $n$ is rather trivial. Indeed, as then $Q_n$ is odd and bipartite, it holds that $\chi_o(Q_n)=\chi(Q_n)=2$. Assuming $n$ is even, the equality $Q_n=Q_{n-1} \Box K_2$ and Observation~$7$ give $\chi_o(Q_n)\leq\chi_o(Q_{n-1})\cdot\chi_o(K_2)=4$. We proceed to show that $\chi_o(Q_n)=4$. Let us look at $Q_n$ as if comprised of two copies of $Q_{n-1}$ with a perfect matching between them, and call any pair of end-vertices of a matching edge `allies'; we shall denote the ally of a vertex $x$ by $\bar{x}$.

Arguing by contradiction, suppose that an odd $3$-coloring $c$ of $Q_n$ exists (still assuming $n$ is even). Note that in each vertex neighborhood $N_{Q_n}(v)$ each of the two colors $\neq c(v)$ occurs an odd number of times. Consider the inherited proper $3$-coloring of the first copy of $Q_{n-1}$. Let $v$ be an arbitrary vertex from this copy, and let it be assigned with the color $1$ whereas the ally $\bar{v}$ of $v$ is colored by $2$. Let $U$ be the intersection of $N_{Q_{n-1}}(v)$ with the color class $[2]=c^{-1}(2)$. Thus $U$ is an even-sized set (possibly empty). Similarly, denote by $W$ the intersection of $N_{Q_{n-1}}(v)$ with the color class $[3]$, hence $W$ is an odd-sized set. Finally, let $V$ be comprised of all second-neighbors of $v$ within the first copy of $Q_{n-1}$ that are colored by $1$. Before proceeding further, let us recall the following important property of any hypercube: every pair of vertices in $N(v)$ have a unique neighbor in the second neighborhood of $v$, and conversely  every vertex  $z$ in the second neighborhood of $v$  is adjacent to exactly two vertices in $N(v)$.

\begin{figure}[ht!]
	$$
		\includegraphics[scale=0.7]{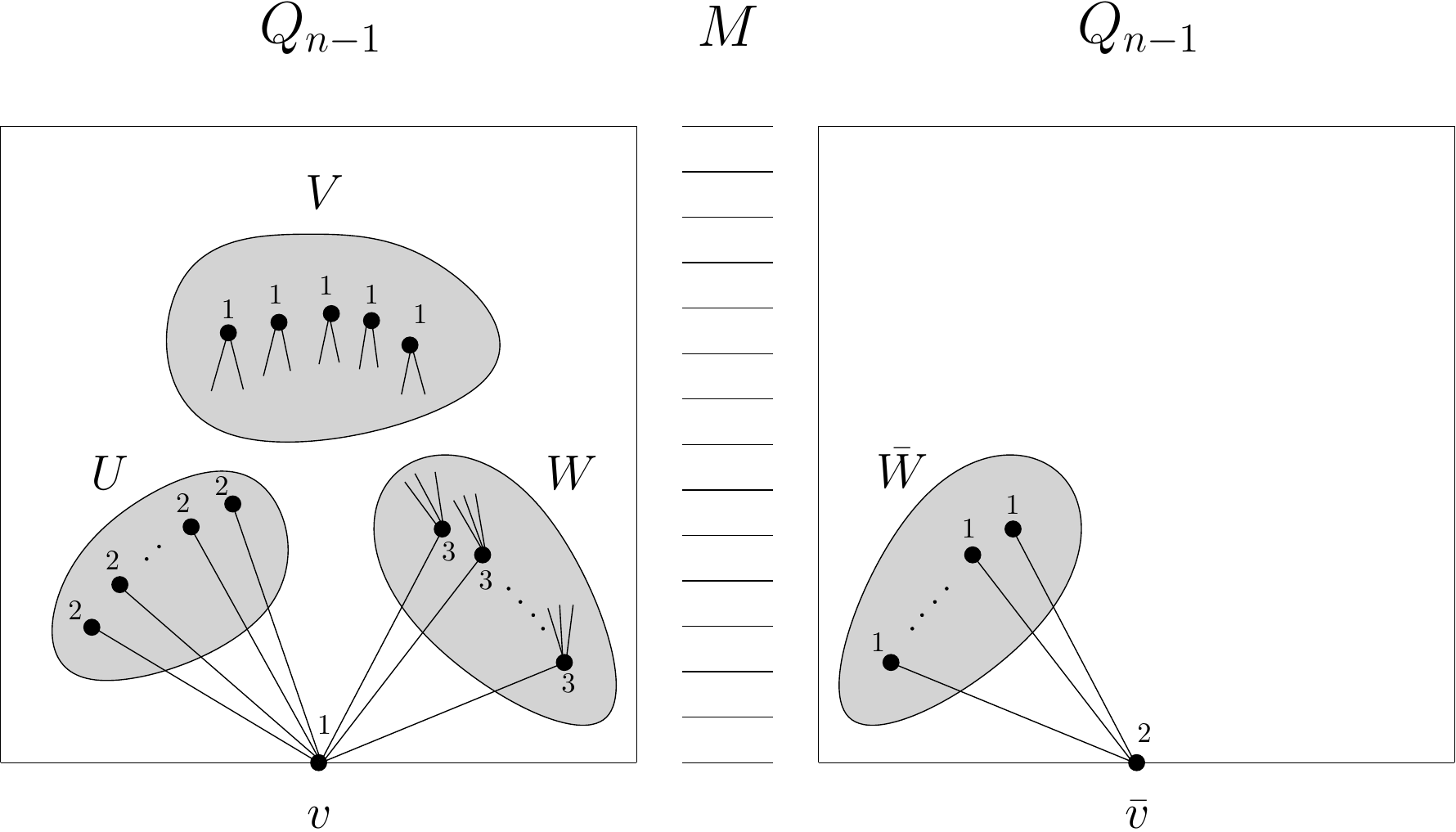}
	$$
	\caption{The sets $U,V$ and $W$. Every vertex from $V$ has two edges going down to $U\cup W$. Every edge from $W$ has an odd number of edges going up to $V$.}
	\label{fig:rules}
\end{figure}

Notice that for every $w\in W$, its ally $\bar{w}$ is adjacent with $\bar{v}$, implying that $\bar{w}$ is colored by $1$. This forces that $V$ is not empty as for each vertex $w$ there are already two neighbors of color $1$ (which are $v$ and $\bar{w}$). Consequently, in the bipartite graph $G=[V,W]$ induced by $V\cup W$, the degree of every $w$ is odd. Thus the size $e(G)=e([V\cup W])$ can be expressed as a sum of an odd number of odd summands, hence it is an odd number. Observe that for every $z\in V$, there are exactly two edges going from $z$ to $U\cup W$. This gives a partition of $V$ into three (possibly empty) subsets $V_{22}, V_{23}, V_{33}$, where $V_{22}$ (resp. $V_{33}$) collects those $z$'s that neighbor with two members of $U$ (resp. $W$) and $V_{23}$ is comprised of all $z$'s having one neighbor in $U$ and another one in $W$. Clearly, every vertex of $V_{22}$ is isolated in $G$, every vertex of $V_{23}$ is a leaf in $G$, and every vertex of $V_{33}$ has degree $2$ in $G$. Hence counting edges in $G$ from the $V$ side we get $|V_{23}|+2|V_{33}|=e(G)\equiv1(\text{mod }2)$, and we conclude that $|V_{23}|\equiv1(\text{mod }2)$. So, $U\neq\emptyset$.

Now let us count the number $N$ of $4$-cycles lying within the first copy of $Q_{n-1}$ that pass through $v$, have two vertices colored by $1$, one vertex colored by $2$ and one vertex colored by $3$. Since every $z\in V_{23}$ belongs to exactly one such $4$-cycle, we conclude that $N=|V_{23}|\equiv1(\text{mod }2)$. On the other hand, for every vertex $u$  in $U$  and every vertex $w$ in $W$, there is a unique neighbor $z$ in the second neighborhood of  $v$   and this $z$  must be colored $1$, hence $z$ is a member of $V_{23}$.
By counting pairs $(u,w)$ in $(U,W)$ that create $(1,2,3,1)$-colored $4$-cycles passing through $v$, we conclude that there are precisely  $|U||W| = N$  such pairs. However, as $|U|\equiv0(\text{mod }2)$, we have $|U||W|\equiv0(\text{mod }2)$, which contradicts the already established $N\equiv1(\text{mod }2)$.
\end{proof}

\noindent\textbf{Remark.} Recall that $N(v)$ (resp. $N[v]$) denotes the open (resp. closed) neighborhood of a vertex $v$ in $G$. Similarly, $N_2(v)$ is the second neighborhood of $v$. Let us call
a bipartite graph $G$ \textit{nice} if for a vertex $v$ in $G$ holds: $(i)$ every vertex in $N[v]$ is of odd degree; $(ii)$ for every $u,w\in  N(v)$, $|N(u)\cap N(w)| = 2$; and $(iii)$ for every $z\in N_2(v)$, $|N(z)\cap N(v)| = 2$.
The proof of Theorem~\ref{hypercube} applies verbatim to the following more general result: \textit{If $G$ is a nice graph, then $\chi_o(G\Box K_2) = 4$.}

%--------------------------------------------------------------------------------------------------------------------------------

\section{Upper bounds on the odd chromatic number}

In this final section we give and conjecture several upper bounds for the odd chromatic number. In the first third we confine to planar graphs and revisit a conjecture recently posed in~\cite{PetSkr22}. The second third deals with certain degenericity aspects. Finally, in the last third we discuss bounding the odd chromatic numbers by a linear function of the maximum degree.

\paragraph{Planar graphs.} The following has been proven in~\cite{PetSkr22}.

\begin{theorem}
    \label{t.9colors}
 If $G$ is a connected planar graph, then $\chi_o(G)\leq 9$.
   \end{theorem}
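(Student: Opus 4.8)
The plan is to establish the upper bound $\chi_o(G)\leq 9$ for connected planar graphs by combining the structural sparsity of planar graphs with the domination-based bound from Observation~\ref{domination}, namely $\chi_o(G)\leq\gamma_e(G)+\chi(G)$. Since $\chi(G)\leq 4$ by the Four Color Theorem, it would suffice to control the even domination number, but $\gamma_e(G)$ can grow with the order of $G$, so a direct application cannot succeed. Instead I would aim for a discharging argument or a careful greedy coloring that simultaneously manages properness and the parity requirement, working directly on the planar structure.

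The most promising approach uses the fact that planar graphs are $5$-degenerate; even better, every planar graph has a vertex of degree at most $5$. First I would order the vertices $v_1,\ldots,v_n$ so that each $v_i$ has at most $5$ neighbors among $v_1,\ldots,v_{i-1}$ (a degeneracy ordering), and attempt to color greedily. At each step a vertex has at most $5$ already-colored neighbors, so properness needs only $6$ colors; the challenge is to reserve a few extra colors to force the odd-parity condition. The natural idea is to split the palette into a block of colors used for properness and a small reserve used to break parity, arranging that when we color $v_i$ we can also guarantee some color already appears an odd number of times in $N(v_i)$ by the time all its neighbors are colored. Because parity depends on neighbors colored \emph{after} $v_i$ as well, a plain left-to-right greedy pass does not immediately control parity, which is the crux of the difficulty.

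The hard part will be exactly this interaction between the parity condition, which is a global condition on the full neighborhood of each vertex, and the local nature of greedy coloring. I expect the actual proof to circumvent this by a cleverer structural decomposition rather than naive greed: for instance, partitioning $V(G)$ using a planar-specific tool (such as a partition into a forest plus a bounded-degree part, or exploiting the existence of a small even-degree-dominating structure guaranteed by planarity's sparsity $e(G)\leq 3n-6$), and then budgeting colors across the pieces via Observation~\ref{o.2parts}-type additivity. A likely route is to show that planar graphs admit an even dominating set together with a proper coloring whose combined cost is bounded by a constant, pushing $\gamma_e(G)+\chi(G)$ down to $9$ by handling the even-degree vertices with a uniformly bounded number of ``parity'' colors while the Four Color Theorem covers properness.

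Concretely, I would try to prove that the even-degree vertices can be dominated using colors drawn from a reserve of size at most $5$, so that $5+4=9$ colors suffice; establishing that a reserve of constant size works for \emph{all} planar graphs — independent of $n$ — is where the planarity (via bounded average degree or the five-color structure) must do the essential work, and verifying this uniform bound is the step I anticipate being the genuine obstacle.
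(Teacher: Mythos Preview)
Your proposal is not a proof but a sketch, and its one concrete suggestion cannot work. You end by aiming to show that the even-degree vertices of a planar graph can be ``dominated using colors drawn from a reserve of size at most $5$'', effectively asking for $\gamma_e(G)\le 5$ for every connected planar $G$. This is false already for paths: in $P_n$ every internal vertex has degree~$2$, so any even dominating set must dominate all $n-2$ internal vertices, and one checks that $\gamma_e(P_n)$ grows linearly with $n$. More generally, the domination bound $\chi_o(G)\le\gamma_e(G)+\chi(G)$ of Observation~\ref{domination} requires giving the vertices of the dominating set \emph{pairwise distinct} colors (that is what makes the domination useful for parity), so reusing colors from a fixed reserve of size $5$ destroys exactly the property you need. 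There is no way to salvage a constant bound from this approach alone.

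Your earlier intuition---partitioning $V(G)$ into pieces on which $\chi_o$ is small and adding the costs via Observation~\ref{o.2parts}---is the right spirit, but you do not identify a partition that works. The paper's argument (the version presented here) rests on a specific external structural theorem of Aashtab et al.: every connected planar graph of even order partitions into at most four \emph{odd forests}. Since an odd forest $F$ is isolate-free with $\chi_o(F)=2$, Observation~\ref{o.2parts} immediately gives $\chi_o(G)\le 8$ for planar $G$ of even order. Graphs of odd order possessing a vertex of degree~$2$ or of odd degree are reduced to the even-order case. The only remaining case is a planar graph all of whose degrees are even and at least~$4$; planarity then forces a vertex of degree exactly~$4$, and a short local recoloring argument (attaching a leaf, applying the $8$-bound, and possibly spending one extra color on a single neighbor) yields $\chi_o(G)\le 9$. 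The key input you are missing is this odd-forest decomposition; none of degeneracy, greedy ordering, or even domination supplies it.
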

\noindent

Also there was posed the following conjecture.

\begin{conjecture}
    \label{conj:1}
For every planar graph $G$ it holds that $\chi_o(G)\leq 5$.
\end{conjecture}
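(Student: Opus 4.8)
The plan is to attack this open problem with the standard toolkit for planar coloring results: a minimal counterexample together with a reducibility analysis and a discharging argument. Let $G$ be a planar graph with the fewest edges for which $\chi_o(G)\ge 6$, and fix a plane embedding. A first round of routine reductions pins down the coarse structure of $G$. It must be connected, and using the bound $\chi_o(G)\le\max\{3,\chi_o(G_1),\chi_o(G_2)\}$ available when $G$ has a bridge, any bridge would split off a smaller planar graph with $\chi_o\ge 6$, contradicting minimality; so $G$ is bridgeless, and a similar treatment of cut vertices lets one assume $G$ is $2$-connected (hence every face is bounded by a cycle). One then wants lower bounds on $\delta(G)$ and on the degrees of neighbors of small-degree vertices; since $\chi_o(C_5)=5$ is the only tight small instance among cycles by (\ref{cycles}), the extremal structure should be forced to be locally dense.

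The core of the argument is a catalogue of \emph{reducible configurations}. For ordinary proper colorings one only avoids repeating colors along edges, but here deleting a vertex $u$ and recoloring it must simultaneously (a) keep the coloring proper, (b) restore at $u$ a color of odd multiplicity in $N(u)$, and (c) preserve the odd condition at every neighbor of $u$ --- inserting $u$ flips the parity of $c(u)$ in each neighbor's neighborhood. I would phrase reducibility in a list/parity-flexibility language: after coloring $G$ minus a small configuration $F$ by induction with five colors, count the color choices for the deleted vertices that are killed by (a)--(c). A degree-$d$ vertex contributes at most $d$ properness constraints together with one parity constraint per endangered neighbor, and the goal is to show that for sufficiently sparse $F$ the five colors outnumber the active constraints, so a valid extension always exists. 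The expected members of the reducible set are low-degree vertices, adjacent pairs of low-degree vertices, and short faces incident to many small-degree vertices.

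Finally comes the discharging. Assign to each vertex $v$ the charge $\deg(v)-4$ and to each face the charge $|F|-4$; by Euler's formula, for the connected plane graph $G$ these charges sum to $-8$, which is negative. I would then design rules shifting charge from long faces and high-degree vertices onto exactly the negatively charged objects --- the vertices of degree $\le 3$ and the short faces --- that occur in the reducible configurations, and verify that after redistribution every vertex and every face is left with non-negative charge. The resulting non-negative total contradicts $-8$, completing the argument.

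The step I expect to be genuinely hard --- and the reason the statement is still only a conjecture --- is the reducibility analysis, specifically condition (c). Unlike properness, the parity requirement is \emph{non-local}: recoloring $u$ to repair the odd condition at one vertex can simultaneously destroy it at several others, so the naive count of forbidden colors is far too lossy to push the bound from the known value $9$ (Theorem~\ref{t.9colors}) all the way down to the tight value $5$. Finding a reducible family rich enough to sustain a self-balancing discharging scheme, while each configuration remains genuinely reducible under the coupled constraints (a)--(c), is the crux; I would expect it to require ideas beyond those in the proof of Theorem~\ref{t.9colors}, perhaps a more refined parity-bookkeeping or an auxiliary charge accounting for the parities themselves.
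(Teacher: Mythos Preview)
The statement you are attacking is Conjecture~\ref{conj:1}, which the paper leaves \emph{open}; there is no proof in the paper to compare your attempt against. The authors only verify the bound of $5$ for outerplanar graphs (Proposition~\ref{p.outerplanar}) and otherwise cite the bound of $9$ for general planar graphs (Theorem~\ref{t.9colors}), together with the improvement to $8$ under mild parity or degree conditions via the Four Color Theorem (Corollaries~\ref{c.even_order} and~\ref{c.odd_order}).

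Your proposal is a reasonable description of the standard discharging template, but it is not a proof, and you say so yourself. The genuine gap is exactly where you place it: you supply no explicit list of reducible configurations, no verification that any specific configuration can always be extended from an odd $5$-coloring of its complement, and no discharging rules with the requisite case analysis showing every final charge is non-negative. These are not routine details; the non-local parity constraint (your condition~(c)) is precisely what prevents the argument behind Theorem~\ref{t.9colors} from being tightened to $5$, and nothing in your outline indicates how to overcome it. What you have written is a correct diagnosis of the obstruction, not a resolution of it.
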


We prove here this conjecture for outerplanar graphs. Note that the reduction of 2-vertices in the proof bellow
is taken from~\cite{PetSkr22}.

\begin{proposition}
\label{p.outerplanar}
For every outerplanar graph $G$ it holds that $\chi_o(G)\leq 5$. \end{proposition}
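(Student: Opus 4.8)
The plan is to prove that every outerplanar graph $G$ satisfies $\chi_o(G)\leq 5$ by induction on the number of vertices, using the standard structural fact that outerplanar graphs are $2$-degenerate and, more specifically, contain vertices of small degree arranged in controllable local configurations. The key tool is the reduction of $2$-vertices borrowed from~\cite{PetSkr22}: since every outerplanar graph has a vertex of degree at most $2$, we look for a vertex that can be deleted (or a short path that can be contracted/suppressed) so that the smaller graph is still outerplanar, apply the induction hypothesis to obtain an odd $5$-coloring, and then re-insert the removed vertex while repairing both the properness and the parity condition in its neighborhood.

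First I would set up the base cases (small graphs, and in particular graphs where every vertex has odd degree, for which $\chi_o(G)=\chi(G)\leq 3$ by outerplanarity and the remarks preceding the paths/cycles formulas). Then, for the inductive step, I would distinguish by the degree of a chosen low-degree vertex $v$. If $\deg(v)=1$, its single neighbor $w$ forces $v$ to merely be a leaf contributing to $w$'s neighborhood, and the extension is as in the tree argument (Proposition~\ref{trees}): give $v$ a color distinct from $w$ and from whatever color already appears an odd number of times around $w$. If $\deg(v)=2$ with neighbors $x,y$, I would delete $v$, apply induction to $G-v$, and then choose a color for $v$ that (a) differs from $c(x)$ and $c(y)$, and (b) restores the parity condition at $x$, at $y$, and at $v$ itself; the generous budget of five colors is meant to give enough freedom to satisfy all these constraints simultaneously, since each affected vertex imposes only a bounded number of forbidden choices.

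The main obstacle, as always with these parity-repair arguments, is that deleting $v$ can \emph{break} the oddness condition at its neighbors $x$ and $y$: a color that appeared an odd number of times in $N(x)$ in $G-v$ may no longer do so in $G$ once $v$ is recolored, and conversely adding $v$ back perturbs the counts. The delicate point is that $x$ and $y$ may each require a specific parity adjustment, and a single color choice for $v$ must serve both. I expect to handle this by a careful case analysis on the degrees and the existing color multiplicities around $x$ and $y$, possibly recoloring $v$ together with a small number of nearby vertices (as in the bridge and join observations, where local recolorings with fresh colors fix parity). When the naive deletion is too rigid, the alternative is to exploit the outerplanar structure more strongly—reducing along an ear or a maximal path of $2$-vertices bounding the outer face—so that the configuration being reinserted is a short path whose endpoints attach to higher-degree vertices, giving more slack for the parity repair.

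It will be important throughout to verify that the reduced graph remains outerplanar (so that induction applies) and to keep the total palette at five colors; I anticipate that the tightness of the bound (recall $\chi_o(C_5)=5$) means the argument cannot afford to waste colors, so the hard part will be arguing that five colors always suffice to simultaneously meet the properness and the two-or-three parity constraints created at each reinsertion step.
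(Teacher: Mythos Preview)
Your overall shape is right and matches the paper's approach: induct via the $2$-degeneracy of outerplanar graphs, handle degree-$1$ vertices by the tree argument, and reduce degree-$2$ vertices. But there is a concrete gap in your treatment of the $2$-vertex case that makes the proof, as described, not go through---and it is precisely the point where the paper's argument is clean and yours becomes speculative.

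You write that for $\deg(v)=2$ with neighbors $x,y$ you would ``delete $v$, apply induction to $G-v$''. That is not the reduction from~\cite{PetSkr22}; the actual reduction deletes $v$ \emph{and adds the edge $xy$ if it is not already present}. This matters for two reasons. First, without the extra edge there is nothing preventing $c(x)=c(y)$ in the induced coloring of $G-v$, and then no choice of color for $v$ can satisfy the parity condition at $v$ itself (its two neighbors share a color, which appears an even number of times). Second, the added edge makes $N_{G'}(x)$ and $N_G(x)$ differ by exactly the swap $y\leftrightarrow v$ (or by the single addition of $v$ if $xy\in E(G)$), which is what lets the parity repair work with a single forbidden color per neighbor: if $1'$ is an odd color in $N_{G'}(x)$ chosen (when possible) to avoid $c(y)$, then forbidding $1'$ for $v$ preserves oddness at $x$; and in the residual case $1'=c(y)$, the new color $c(v)$ itself becomes odd at $x$. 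Altogether $v$ need only avoid $\{c(x),c(y),1',2'\}$, four colors, so five suffice.

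Because you omit the edge-addition, you correctly sense that ``a single color choice for $v$ must serve both'' parity conditions is delicate, and you fall back on recoloring nearby vertices, ear reductions, and case analysis. None of that is needed: once $G'$ is formed with the added edge (still outerplanar), the extension is a one-line count of forbidden colors. So the missing idea is exactly that edge insertion; with it, your ``main obstacle'' disappears.
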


\begin{proof}
 Suppose $G$ is a minimal counter-example. By Proposition~\ref{trees}, we may assume that $G$ has a cycle, and so it is of order at least $3$. Let $v\in V(G)$ be a vertex of minimum degree. If $d(v)=1$, then take an odd $5$-coloring of $G-v$ by minimality. This coloring extends to an odd $5$-coloring of $G$ since at most two colors are forbidden at $v$: the color of its only neighbor $z$ and possibly another color that is unique in regard to having an odd number of occurrences in $N_{G-v}(z)$. So we may assume that $v$ is of degree $2$, and say $N_G(v)=\{x,y\}$. Construct $G'$ by removing $v$ from $G$ plus connecting $x$ and $y$ if they are not already adjacent. By minimality, $G'$ admits an odd $5$-coloring $c$.  Say $c(x)=1$ and $c(y)=2$, and let color(s) $1'$ and $2'$, respectively, have odd number of occurrences in $N_{G'}(x)$ and $N_{G'}(y)$. If there are more possibilities for $1'$, then choose $1'\neq2$; do similarly for $2'$ in regard to $1$. Extend the coloring to $G$ by using for $v$ a color different from $1,2,1',2'$. The properness of the coloring is clearly preserved. As for the oddness concerning neighborhoods, $v$ is fine because $1\neq2$. If $1'\neq2$ then $x$ is fine since $1'$ remains to be odd in $N_G(x)$. Contrarily, if $1'=2$ then $c(v)$ is odd in $N_G(x)$. Similarly, the vertex $y$ is also fine. The obtained contradiction proves our point.
\end{proof}

As first support to Conjecture~~\ref{conj:1}, it was shown Theorem~\ref{t.9colors}, whose supplied proof used the discharging technique without evoking the Four Color Theorem~\cite{AppHak77,RobSanSeyTho96}. On the other hand, by making use of the Four Color Theorem, the following has been recently shown  (see Theorem 4 in~\cite{AasAkb20}).

\begin{theorem} [Aashtab et al., 2020]
    \label{odd_forest}
Let $G$ be a connected planar graph of even order. Then $V(G)$ partitions into at most $4$ sets such that each partite set induces an odd forest.
\end{theorem}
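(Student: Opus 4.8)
The plan is to build the four parts so that each induces an \emph{odd forest}, i.e.\ an acyclic subgraph in which every vertex has odd degree. First I would record the parity bookkeeping that forces the even-order hypothesis: an odd graph has an even number of vertices (the number of odd-degree vertices is always even), so every odd forest has even order, and hence a partition of $V(G)$ into odd forests can only exist when the relevant pieces have even order. Any construction must respect this, and it is the reason the statement restricts to graphs of even order.

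The source of the number $4$ should be the Four Color Theorem. My first attempt would be to take a proper $4$-coloring of $G$ and use its independent classes $C_1,\dots,C_4$ as a scaffold, then repair each class into an odd forest by moving vertices between classes. Since an independent set induces an edgeless graph (all degrees $0$, hence even), the repair must \emph{create} intra-class adjacencies in a controlled way; I would try a local exchange argument, switching a vertex's class along Kempe-type lines so as to flip, by one, the parity of its neighbours in the target class without destroying the forest structure already achieved. Equivalently one can phrase the requirement over $\mathrm{GF}(2)$---for each $v$ one wants $|N(v)\cap V_i|$ odd in the class of $v$, a linear condition---and combine it with a Gallai-type even/odd splitting (every graph splits into an even-degree part and an odd-degree part) applied inside the bipartite pieces coming from the colouring. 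A cleaner scaffold for acyclicity alone is the bound of $3$ on the vertex arboricity of planar graphs; this already partitions $V(G)$ into three forests, leaving a budget of one extra part to absorb problematic vertices, though making the three forests odd while keeping the fourth part itself an odd forest runs straight into the coordination problem below.

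As an alternative to exchange arguments I would set up a minimal counterexample and seek reducible configurations: a vertex of small degree (planar graphs always have one of degree at most $5$), or a pair of vertices removable together so as to preserve even order, extending an odd-forest $4$-partition of the smaller graph back to $G$ by assigning the deleted vertices to suitable parts. A discharging argument tailored to the planar structure would then rule out a minimal counterexample, with the Four Color Theorem invoked to handle a reduced or sparse remnant.

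The hardest part will be reconciling the two requirements \emph{simultaneously}: acyclicity and the all-odd-degree condition pull against each other, since adding an edge inside a part to correct one vertex's parity may close a cycle, while deleting a vertex to break a cycle flips the parities of its neighbours and may create a fresh even-degree vertex elsewhere. A single repair therefore propagates, and the danger is an uncontrolled cascade. Organising the parity corrections globally so that exactly four parts suffice and each remains a forest is the crux; I expect it to require exploiting both the planar (sparse, small-separator) structure and the global even-order parity constraint at once, rather than either tool in isolation. The example $C_6$, which genuinely needs three odd forests and cannot be split into two, confirms that the bound is not merely $\chi(G)$ and that any successful argument must use planarity beyond four-colourability.
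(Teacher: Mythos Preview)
The paper does not prove this theorem at all: it is quoted from Aashtab, Akbari, Ghanbari and Shidani~\cite{AasAkb20} and used as a black box to derive Corollaries~\ref{c.even_order} and~\ref{c.odd_order}. There is therefore no proof in the paper to compare your proposal against.

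That said, your proposal is not a proof either. What you have written is a survey of plausible attack lines---Four Color Theorem as a scaffold, Kempe-style exchanges, $\mathrm{GF}(2)$ parity constraints, vertex arboricity, minimal counterexample plus discharging---together with an honest acknowledgement that you do not see how to make any of them work. In particular, you correctly identify the central obstruction (acyclicity and the all-odd-degree condition interact badly under local repairs), but you offer no mechanism to overcome it: no actual reducible configuration is exhibited, no discharging rules are specified, and the ``repair'' and ``exchange'' ideas are never made precise enough to see whether the cascade you warn about can be controlled. The $C_6$ remark at the end is also off-target: $C_6$ splits into three copies of $K_2$, each an odd forest, so it does not show that more than $\chi(G)$ parts are ever needed.

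If you want an actual argument, you should consult the Aashtab et al.\ paper directly; their proof does use the Four Color Theorem, but the passage from a proper $4$-coloring to an odd-forest partition is the entire content and is not something your outline supplies.
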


From Theorem~\ref{odd_forest} and Observation~\ref{o.2parts} it immediately follows that
\begin{corollary}
\label{c.even_order}
If $G$ is a connected planar graph of even order, then $\chi_o(G)\leq8$.
\end{corollary}
\begin{proof}
Every odd forest $F$ is isolate-free and has $\chi_o(F)=\chi(F)=2$. By Observation~\ref{o.2parts}, $\chi_o(G)\leq 2+2+2+2=8$.
\end{proof}

In regard to planar graphs of odd order, we have the following.

\begin{corollary}
\label{c.odd_order}
If $G$ is a connected planar graph of odd order that has a vertex  of degree 2 or any odd degree, then $\chi_o(G)\leq 8$.
\end{corollary}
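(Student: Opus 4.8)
The plan is to reduce both situations to Corollary~\ref{c.even_order}, which already settles connected planar graphs of \emph{even} order. Since $n(G)$ is odd, removing or suppressing a single well-chosen vertex yields an even-order graph, and the two alternatives in the hypothesis --- a vertex of odd degree, or a vertex of degree $2$ --- call for two different reductions. The hypothesis guarantees that at least one such vertex is present, so these two cases are exhaustive.

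First I would treat a vertex $v$ of odd degree. Attach a new pendant leaf $u$ to $v$, producing a connected planar graph $H$ of even order $n(G)+1$. By Corollary~\ref{c.even_order}, $\chi_o(H)\le 8$; fix such a coloring $c$ and restrict it to $G=H-u$. Properness is immediate and every vertex other than $v$ retains its neighborhood, so only the parity condition at $v$ must be rechecked. The key observation is that $\deg_H(v)=\deg_G(v)+1$ is even, and in any even-sized multiset of colors the number of colors of odd multiplicity is itself even; since $c$ is an odd coloring this number is at least $1$, hence at least $2$. Therefore some color $\alpha\ne c(u)$ occurs an odd number of times in $N_H(v)$, and deleting $u$ (whose color is $\ne\alpha$) leaves $\alpha$ odd in $N_G(v)$. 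Thus $c$ restricts to an odd $8$-coloring of $G$.

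Next I would treat a vertex $v$ of degree $2$, with $N_G(v)=\{x,y\}$. Suppress $v$: let $G'=(G-v)+xy$, adding the edge $xy$ only when $x,y$ are not already adjacent. Then $G'$ is connected, planar and of even order $n(G)-1$, so $\chi_o(G')\le 8$ by Corollary~\ref{c.even_order}. To pass back to $G$ I would reinsert $v$ and color it exactly as in the degree-$2$ extension in the proof of Proposition~\ref{p.outerplanar}: writing $c(x)=1$, $c(y)=2$, selecting colors $1'$ and $2'$ that occur oddly in $N_{G'}(x)$ and $N_{G'}(y)$ (preferring $1'\ne 2$ and $2'\ne 1$ whenever possible), and assigning $v$ a color outside $\{1,2,1',2'\}$. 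At most four colors are forbidden and eight are available, so such a color exists; the same short case analysis as for outerplanar graphs confirms the parity condition at each of $v,x,y$.

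I expect the genuine content to lie in the odd-degree case. A direct deletion of $v$ from $G$ need not keep the graph connected, and even the pendant-leaf detour risks destroying the parity condition at $v$ upon removing $u$; what rescues it is the parity lemma, which forces \emph{two} colors of odd multiplicity in the even neighborhood $N_H(v)$, only one of which can be the color of the deleted leaf. The degree-$2$ case, by contrast, is a routine transcription of the reduction already used for outerplanar graphs, the sole difference being that eight colors leave ample room for the extension. The only auxiliary facts to verify in both reductions are that $H$ and $G'$ remain connected and planar, which are the standard effects of adding a pendant vertex and of suppressing a degree-$2$ vertex.
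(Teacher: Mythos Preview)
Your proof is correct and follows the same two reductions as the paper: suppress a degree-$2$ vertex, or attach a leaf to an odd-degree vertex, then invoke Corollary~\ref{c.even_order}. For the odd-degree case the paper gives a shorter justification than yours: once the leaf $u$ is removed, $v$ has odd degree in $G$, so by parity some color in $N_G(v)$ automatically has odd multiplicity---there is no need to argue through the even neighborhood $N_H(v)$.
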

\begin{proof} Let $v$ be a vertex with degree in $\{1, 2, 3, 5, 7,\ldots\}$.
If $d(v)=2$, then we deal as it is already explained in details in the proof of  Proposition~\ref{p.outerplanar}: delete
$v$ and add an edge between its neighbors if they are not already adjacent; take an odd 8-coloring $c$ of $G'$ by Corollary~\ref{c.even_order}, and extend $c$ to $G$ as we have at least four available colors for $v$.

And, if $v$ is a vertex of odd degree, then attach to it a new leaf $w$. The obtained graph $G'$ is connected and of even order, hence we
can apply Corollary~\ref{c.even_order} and use an odd coloring $c$ of $G'$. Now observe that $c$ is also an odd coloring
of $G$ as therein $v$ is of odd degree and hence the oddness condition at $v$ is satisfied a priori.
\end{proof}

Notice that if one is able to somehow reduce vertices of degree 4, then it would yield the conclusion that every planar graph $G$ satisfies
$\chi_o(G)\leq 8$.  For the time being, we can only reduce them by introducing a new color, say 9, at one 4-vertex. This gives an alternative proof of Theorem~\ref{t.9colors}. The succinct proof below is only several lines long, but on other hand it relies on the Four Color Theorem.

\bigskip
\noindent
{\bf Proof of Theorem~\ref{t.9colors}}
If the graph $G$ is of even order or it has an odd vertex or a vertex of degree 2, then we are done by Corollaries~\ref{c.even_order} and \ref{c.odd_order}. Assume all degrees in $G$ are even  $\ge 4$. As $G$ is planar,  it must have a vertex $v$ of degree 4.
Attach a leaf $w$ to $v$ to get $G^*$ which is planar, connected and of even order.  Hence $\chi_o(G^*) \le 8$, and we may consider an odd $8$-coloring $c$ of $G^*$. Say $c(w) = 1$,  $c(v)= 2$ and let $2^*$ be a color with an odd number of occurrences in $N_{G^*}(v)$. If $1\ne  2^*$, then simply remove $w$ and conclude that $\chi_o(G) \le 8$.  So we may assume $1=2^*$ is the only `odd
color` in' the neighborhood of $v$.
Choose $u\in N_{G}(v)$ and recolor it by setting $c(u) = 9$. Remove $w$. The resulting coloring $c$ of $G$ is clearly a proper coloring. All neighbors
of $u$ (including $v$) have the color $9$ as an odd color in their respective neighborhoods. As for the vertex $u$, it has an `odd color' (induced from $G^*$) in its neighborhood because no neighbor of it was deleted or changed color.\qed
%The proof is "short" (but it is based on the Four Color Theorem).

%Now, we give an alternative proof of Theorem 1.2 from   \cite{PetSkr22}. The proof is "short" (but it is based on the Four Color %Theorem).

%\begin{theorem}
%    \label{t.9colors}
%If $G$ is a connected planar graph, then $\chi_o(G)\leq 9$.
%\end{theorem}
%\noindent
%{\bf Sketch of the proof.}  The proof is by induction on $n$, the order of $G$. By above corollaries, we may assume that  $n$ is  odd and all vertices are of even degree $\ge 4$.  As $G$ is planar, it follows that we have a vertex $v$ of degree 4.
%We use the same argument as the one explained in details in  \cite[see Claim 1 of the proof of Theorem 1.2]{{PetSkr22}}. And, it goes by contracting en edge incident with $v$, say $e=vu$. As we odd-color $G/e$ with colors $1,\ldots,8$, we put a new color, say 9, to $v$. Note that this reduction assures that some color appear uniquly in the neigborhod of $v$ (it is the color of $u$).
%\qed
%----------------------------------------------------------------------------------------------------------------

\paragraph{Degenericity.} Let us consider briefly certain aspects of bounding the odd chromatic number of a graph in regard to its degenericity. Start by noting that the graph $G$ obtained by subdividing once every edge of $K_n$, is a $2$-degenerate bipartite graph having $\chi_o(G) \ge n$.
Hence a natural question is when does a $k$-degenerate graph have its odd chromatic number bounded by a linear
function of $k$, for example,  $\chi_o(G) \le 2k +1$.

\smallskip

Recall the classical notion of a $k$-tree: it is a graph formed by starting with a copy of $K_{k+1}$, and  then repeatedly adding vertices in such a way that each added vertex $v$ has exactly $k$ neighbors $U$ so that, together, the $k + 1$ vertices formed by $v$ and $U$ form a clique. (Thus $1$-trees are the same as trees.) Motivated by this constructive definition of $k$-trees, we introduce a similar notion. Namely, we say that a connected graph $G$ is a {\it half $k$-tree} if $G$ is established in the following way:
\begin{itemize}
\item The initial stage of $G$ is any connected graph $H$ on $p$ vertices, where $2 \le p \le k+1$ and such that $\chi(H)\ge \lfloor p/2\rfloor +1$.
(In particular, $H$ may contain a clique of order at least $\lfloor p/2\rfloor +1$. )
\item  At each intermediate stage, add a new vertex $v$, adjacent to at most $k$ vertices such that the subgraph of the current $G$ induced by $N(v)$
has  chromatic number at least $\lfloor d(v)/2\rfloor +1$, i.e. $\chi(G[N(v)])\ge \lfloor d(v)/2\rfloor +1$. (In particular, $N(v)$ may contain a clique of
cardinality at least $\lfloor d(v)/2\rfloor +1$.)
\end{itemize}
Observe that $k$-trees form a subclass of the newly defined class of half $k$-trees. The reason for calling it `half $k$-tree'  is an obvious adoption inspired by $k$-trees and the fact the for a half $k$-tree,  $N(v)$ induces a subgraph of chromatic number at least  $\frac{d(v)}{2} + 1$ (here is where the adjective `half' comes from whereas in a $k$-tree this neighborhood gives a clique).

\begin{proposition}
If $G$ is a half $k$-tree, then $\chi_o(G) \le 2k +1$.
\end{proposition}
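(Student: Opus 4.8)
The plan is to prove this by induction on the construction of the half $k$-tree, mimicking the greedy vertex-by-vertex argument used throughout the paper (e.g. in Proposition~\ref{p.outerplanar} and Observation~\ref{o.newvertex}). The key structural feature to exploit is the defining condition $\chi(G[N(v)])\geq\lfloor d(v)/2\rfloor+1$: a proper coloring of the neighborhood with few colors cannot have every color class be even, since by a parity/pigeonhole argument too many even-sized classes would force the chromatic number of the neighborhood to be small. Concretely, I would first establish the following counting claim: if the neighbors of $v$ (with $d(v)=d$) are properly colored and \emph{every} color class within $N(v)$ has even size, then the number of distinct colors appearing on $N(v)$ is at most $\lfloor d/2\rfloor$, which would contradict $\chi(G[N(v)])\geq\lfloor d/2\rfloor+1$ because a proper coloring of $G[N(v)]$ needs at least $\chi(G[N(v)])$ colors. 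Hence under any proper coloring of $N(v)$ that uses at most $\lfloor d/2\rfloor$ colors, some color class is necessarily odd-sized, which is exactly the oddness condition at $v$.

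Second, I would set up the induction. Order the vertices $v_1,v_2,\ldots,v_n$ in the order they are added in the half $k$-tree construction, starting with the initial graph $H$. For the base case, I color $H$ with a proper coloring using $\chi(H)\geq\lfloor p/2\rfloor+1$ colors drawn from the palette $\{1,\ldots,2k+1\}$; since $H$ is small ($p\leq k+1$) this fits comfortably, and I would verify the oddness condition holds inside $H$ using the counting claim applied to each vertex of $H$ (treating $H$ as the neighborhood structure). For the inductive step, when adding $v$ with neighborhood $N(v)$ already colored, the task is to choose $c(v)\in\{1,\ldots,2k+1\}$ so that (i) properness is preserved at $v$, (ii) the oddness condition at $v$ itself holds, and (iii) the oddness condition at each already-colored neighbor $u\in N(v)$ is not destroyed.

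For the color-counting bound on forbidden colors, note $d(v)\leq k$, so at most $k$ colors are forbidden by properness (the colors on $N(v)$), leaving at least $k+1$ admissible colors from the palette of $2k+1$. The oddness at $v$ is guaranteed by the counting claim regardless of which admissible color we pick. The delicate part is (iii): each neighbor $u$ currently relies on some odd-sized color class in its own neighborhood, and adding $v$ (giving it color $c(v)$) could flip the parity of that class for those $u$ with $c(v)$ equal to $u$'s witnessing color. To handle this, I would argue that each such $u$ forbids at most one color of $c(v)$ (the witnessing color), but since a neighbor $u$ may have several color classes of odd size, or we may re-choose a witness for $u$ after adding $v$, the constraint is mild; more carefully, I would show that the number of colors forbidden for the sake of preserving neighbors' oddness, together with the $d(v)\leq k$ properness constraints, still leaves an available color within $2k+1$.

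The hard part will be step (iii): controlling the interaction with the oddness condition at the already-colored neighbors. The cleanest route is probably to observe that giving $v$ a color that is \emph{fresh} at each neighbor---i.e. a color not already appearing in any neighbor's neighborhood as its unique odd witness---suffices, and to bound the number of such spoiled colors by $d(v)\leq k$, which together with the $\leq k$ properness constraints is covered by $2k+1$ colors (leaving at least one free). Alternatively, and perhaps more robustly, one can note that flipping a single color class at $u$ from odd to even still leaves $u$ with a valid odd witness as long as $u$ had another odd class, or one re-picks a witness using the same counting claim at $u$; this requires verifying that the counting claim remains applicable to $u$ after $v$ has been added to $N(u)$, i.e. that the augmented neighborhood still satisfies the chromatic lower bound, which holds since adding a vertex cannot decrease the chromatic number of the induced neighborhood. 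I expect the bookkeeping of simultaneously satisfying constraints (ii) and (iii) to be the crux, and the slack of $2k+1$ colors against the $\leq k$ properness plus $\leq k$ oddness-preservation constraints is precisely what makes the bound work out.
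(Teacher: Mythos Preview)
Your overall plan coincides with the paper's proof: induct on the construction order, take an odd $(2k+1)$-coloring of $G-v$ by induction, and extend to $v$ by forbidding at most $d(v)\le k$ colors for properness plus at most $d(v)\le k$ ``unique odd-witness'' colors for the neighbors of $v$, leaving an available color. Your counting claim is exactly the mechanism the paper uses (there split into the cases $d(v)$ odd and $d(v)$ even) to guarantee an odd-occurring color in $N(v)$. So the ``cleanest route'' you describe in the inductive step is correct and is what the paper does.

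There are two genuine gaps, however. First, your base case does not go through. You propose to color the initial graph $H$ with $\chi(H)$ colors and then ``verify the oddness condition holds inside $H$ using the counting claim applied to each vertex of $H$.'' But the counting claim needs, for each $u\in V(H)$, that $\chi(H[N_H(u)])\ge\lfloor d_H(u)/2\rfloor+1$, and the definition of half $k$-tree says nothing of the sort: it only constrains $\chi(H)$ itself, not the chromatic numbers of neighborhoods within $H$. The paper sidesteps this entirely by using the trivial bound $\chi_o(H)\le |V(H)|\le k+1$, i.e., distinct colors on $H$. Second, your ``alternative route'' for (iii)---re-picking a witness for a neighbor $u$ via the counting claim after $v$ is attached---also fails for the same reason: the half $k$-tree condition on $u$ refers to $u$'s neighborhood \emph{at the moment $u$ was inserted}, not to $N_G(u)$ in the final graph (and for $u\in V(H)$ there is no neighborhood condition at all), so the counting claim cannot be re-invoked at $u$. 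Stick with your first route. As a minor aside, the sentence ``under any proper coloring of $N(v)$ that uses at most $\lfloor d/2\rfloor$ colors, some color class is necessarily odd-sized'' is vacuous (no such proper coloring exists); the correct contrapositive is simply that \emph{every} proper coloring of $G[N(v)]$ has an odd-sized class.
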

\begin{proof}
The proof goes by induction on the order $n$ of $G$.  For the initial stage $H$, as $V(H) \le k +1$, we have
$\chi_o(H) \le k +1< 2k +1$. Assume induction holds for $n-1$ and let $G$ be a connected half $k$-tree graph
on $n$ vertices. Let $v$ be the last vertex in the process of creating $G$.
Then $d(v) = t \le k$ and $\chi(G[N(v)]) \ge \lfloor d(v)/2\rfloor +1$.
Consider $H = G - v$. By definition $H$ is a half $k$-tree; in particular $H$ is connected (by definition, how these graphs are constructed) and by induction $\chi_o(H) \le 2k +1$.

Now consider the neighbors of $v$, say $u_1,\ldots,u_t$, where clearly $1\le t \le k$ and distinguish between the following
two cases.

\smallskip

\noindent
{\bf Case 1:} {\em $t$ is odd.}
Then at least one of the colors on the vertices in $N(v)$ appears an odd number of times. Since there are $t$ neighbors
of $v$, all the neighbors forbid at most $t$ colors for $v$. Since each of the neighbors has at least one `odd color', such odd colors (one from each neighbor of $v$) may forbid (if unique) another $t$ colors for $v$. Hence at
most $2t \le 2k$ colors are forbidden, and thus we have an available color
for $v$. As $t$ is odd, the coloring is indeed odd,
proving that $\chi_o(G) \le 2k +1$ holds in this case.

\smallskip

\noindent
{\bf Case 2:} {\em $t$ is even.}
Set $t = 2q$. By definition, $\chi(G[N(v)])\ge q +1$ and at least $q+1$ vertices of $N(v)$ receives distinct colors in
the coloring of $H$.
The rest $q-1$ vertices in $N(v)$ give at least two vertices whose colors appear exactly once in $N(v)$.
So $N(v)$ already has an odd coloring.
Now as $v$ has $t \le k$ neighbors and each such neighbor and an odd color related to this neighbor in
$H$ may forbid at most $2$ colors, we conclude that at most $2t \le 2k$ colors are forbidden for $v$. Hence we have a free color for $v$ that extends the odd coloring of $H$ to an odd coloring of $G$ by using at most $2k +1$ colors.
\end{proof}

A graph is {\em maximal $k$-degenerate} if each induced subgraph has a vertex of degree at most $k$ and adding
any new edge to the graph violates this condition. Such a graph can be constructed in the following way \cite{FMS}:
starting from the complete graph $K_{k +1}$, in each subsequent step add a vertex adjacent to precisely $k$ vertices.  The sequence of vertices $v_n,\ldots,v_{k+2}$ that are added sequentially  to $K_{k +1}$ is called the {\em elimination order}.

 \begin{proposition}
    \label{prop odd k}
Let $G$ be a maximal  $k$-degenerate graph where $k$ is odd.  Then $\chi_o(G) \le 2k +1$.
\end{proposition}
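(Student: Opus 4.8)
The plan is to induct on the order $n$ of $G$ using the elimination-order structure of maximal $k$-degenerate graphs. The base case is $K_{k+1}$, which has $k+1 \le 2k+1$ vertices and hence $\chi_o(K_{k+1}) \le k+1 \le 2k+1$. For the inductive step, let $v = v_n$ be the last vertex in the elimination order; then $v$ has exactly $k$ neighbors $u_1,\ldots,u_k$ in $G$, and $H = G - v$ is again maximal $k$-degenerate (with the same elimination order truncated), so by induction $H$ admits an odd $(2k+1)$-coloring $c$. The task is to extend $c$ to $v$ while (i) keeping properness, (ii) ensuring $v$ itself sees some color an odd number of times in $N_G(v)$, and (iii) not destroying the oddness condition already enjoyed by each $u_i$.

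The key observation is that $d(v) = k$ is \emph{odd} by hypothesis. This is exactly the favorable situation isolated in Case~1 of the preceding proposition: when $|N(v)|$ is odd, the multiset of colors $\{c(u_1),\ldots,c(u_k)\}$ cannot have every color appearing an even number of times (an odd total cannot split into all-even parts), so automatically some color occurs an odd number of times in $N_G(v)$. Thus condition (ii) is satisfied for \emph{any} proper extension, and we need not engineer the color of $v$ to create an odd neighbor-color. This removes the only genuinely delicate obstruction.

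It then remains to count forbidden colors for $v$. Properness forbids the (at most) $k$ colors $c(u_1),\ldots,c(u_k)$. For condition (iii), each neighbor $u_i$ currently has some color that occurs an odd number of times in $N_H(u_i)$; adding $v$ as a new neighbor of $u_i$ could flip the parity of exactly one such color, namely $c(v)$ itself, potentially spoiling $u_i$'s unique odd color. To protect $u_i$ we forbid at most one color per neighbor — the (a) color whose oddness at $u_i$ we wish to preserve — giving at most another $k$ forbidden colors. Hence at most $2k$ colors are forbidden in total, and since the palette has $2k+1$ colors, a free color remains for $v$; assigning it completes an odd $(2k+1)$-coloring of $G$.

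I expect the main subtlety to be the bookkeeping in step (iii): one must argue carefully that each neighbor $u_i$ can be rescued by forbidding just a single color, i.e. that if coloring $v$ with $c(v)$ would destroy $u_i$'s oddness, it suffices to avoid that one value, and that these constraints coexist with the $k$ properness constraints without exceeding the budget of $2k$. The parity of $d(v)$ does all the heavy lifting for condition (ii), so the argument is essentially the Case~1 counting of the previous proposition applied uniformly along the elimination order; the only real care is in the odd-color-preservation count, which is why the hypothesis that $k$ is odd (so that \emph{every} added vertex has odd degree $k$) is exactly what makes the induction go through cleanly.
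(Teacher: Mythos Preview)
Your proof is correct and follows essentially the same route as the paper's: induct along the elimination order, use the inductive hypothesis on $H=G-v$, observe that $d(v)=k$ odd guarantees an odd-count color in $N_G(v)$ automatically, and count at most $k$ properness constraints plus at most $k$ odd-color-preservation constraints to leave one of the $2k+1$ colors free for $v$. The paper's proof is terser but identical in substance; your elaboration of why only one forbidden color per neighbor suffices for condition~(iii) is a welcome clarification of what the paper compresses into a single clause.
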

\begin{proof}
 The proof goes by induction on $n$. For $n =k +1$, $G = K_{k +1}$ and we have  $\chi_o(G) = \chi(G) =  k+1 \le 2k +1$.

Now, assuming that the claim holds for $n$, we prove it for $n +1$. In order to do so, let $v$ be the last vertex in the elimination order. Then $d(v) = k$. Consider $H = G - v$.  Clearly $H$ is  maximal  $k$-degenerate (this is a hereditary property).
Since $G$ is  $k$-connected,  $H$ is maximal $k$-degenerate  $k$-connected  (hereditary for $n\ge k +1$, see \cite[Proposition 2]{FMS}) and $\chi_o(H) \le  2k +1$ by induction.
Consider $N(v)$ in $G$. Since there are $k$ vertices in $N(v)$ and $k$ is odd, one of the colors must appear an odd number of times. Also $v$ has at most $2k$ forbidden colors imposed by its neighbors and their `odd color' classes. So there is an available color for $v$ which completes an odd coloring of $G$ with at most $2k +1$ colors.
\end{proof}

Proposition~\ref{prop odd k} supplies a linear upper bound  (of $2k +1$) for the odd chromatic number of maximal $k$-degenerate graphs in case  $k$ is odd.
It is of interest to obtain an upper bound depending only on $k$, in particular a linear upper bound, for the odd chromatic number of maximal $k$-degenerate graphs in case $k$ is even.

%----------------------------------------------------------------------------------------------------------------
%\newpage

\paragraph{Maximum degree.} We start this final portion by showing that, excepting $C_5$, the odd list chromatic number of every other connected graph is at most twice its maximum degree.

\begin{theorem}
\label{p.2D-choosability}
Let $G$ be a connected graph of maximum degree $\Delta\geq1$. Every vertex $v\in V(G)$ is assigned with a color list $L(v)$ of size $2\Delta$. Then $G$ admits an odd coloring $\varphi$ such that $\varphi(v)\in L(v)$ for each $v$, unless all lists are the same and $G=C_5$.
\end{theorem}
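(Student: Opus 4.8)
The plan is to argue by a minimal counterexample, inducting on the order of $G$. Suppose $G$ is connected with maximum degree $\Delta$, every list has size $2\Delta$, and $G$ is a smallest graph (with a list assignment $L$) admitting no odd $L$-coloring, with $G$ not equal to $C_5$ carrying identical lists. The workhorse is the following extension principle: if $v$ is a vertex and $G-v$ already has an odd $L$-coloring, then in coloring $v$ each neighbor $u$ blocks at most two colors --- its own color (for properness) and, if $u$ currently has a \emph{unique} odd color in $N_{G-v}(u)$, that color as well (so as not to flip its parity). If $u$ has two or more odd colors it blocks only its own color, since adding $v$ leaves at least one of them odd. Hence at most $2d(v)\le 2\Delta$ colors are blocked for $v$, and the only remaining issue is securing oddness \emph{at $v$ itself}. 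Here the parity of $d(v)$ is decisive: if $d(v)$ is odd, the $d(v)$ color-counts in $N(v)$ cannot all be even, so oddness at $v$ holds automatically, and the genuine difficulty is confined to even-degree vertices.

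First I would dispose of the low-degree reductions. If $G$ has a leaf $v$, its single neighbor blocks at most two colors, $d(v)=1$ is odd so oddness at $v$ is free, and since $\Delta\ge 2$ (the case $\Delta=1$, i.e.\ $G=K_2$, being an immediate base case) the list of size $2\Delta\ge 4$ still offers a free color, contradicting minimality; thus $\delta(G)\ge 2$. Next, for a vertex $v$ of degree $2$ with neighbors $x,y$ I would reuse the $2$-vertex reduction from the proof of Proposition~\ref{p.outerplanar}: delete $v$ and add the edge $xy$ if absent (this does not increase $\Delta$, since $x$ and $y$ each merely trade $v$ for one another), color the smaller graph, and extend. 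The added edge forces $c(x)\ne c(y)$, which is exactly what makes both colors appear once in $N(v)$ and so secures oddness at the even-degree vertex $v$; the protection of $x$ and $y$ is handled verbatim as in that proof. At most four colors are blocked, so the reduction succeeds whenever $2\Delta>4$. Consequently a counterexample with $\Delta\ge 3$ satisfies $\delta(G)\ge 3$.

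The threshold $2\Delta=4$ is precisely where $C_5$ enters, so I would treat $\Delta=2$ --- paths and cycles --- as a separate base analysis. Paths and all cycles other than $C_5$ are odd $4$-choosable by the same extend-a-$2$-vertex argument (invoking Proposition~\ref{trees} for the path case). For $C_5$ the oddness condition at each vertex demands that its two neighbors receive distinct colors; as the neighbors of a vertex are mutually at distance $2$, ranging over all vertices forces every distance-$2$ pair to differ, which together with properness amounts to a proper coloring of $K_5$ and hence five distinct colors. With identical lists of size $4$ this is impossible --- the stated exception --- whereas with non-identical size-$4$ lists one checks directly that the extra freedom suffices.

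The main obstacle is the remaining regime: even-degree vertices of degree $\ge 4$, where plain deletion does not control oddness at $v$, together with the tight situation $d(v)=\Delta$ (in particular $\Delta$-regular graphs), where $2d(v)=2\Delta$ leaves no slack. My plan for oddness at an even-degree $v$ is a local exchange: after coloring $G-v$, if every color occurs an even number of times in $N(v)$, recolor a single neighbor $u$ to a new admissible color; this simultaneously turns $u$'s old color and its new color odd in $N(v)$, fixing $v$, while $u$'s own oddness is untouched (it depends only on $N(u)$). The point to control is that the parities at the \emph{other} neighbors of $u$ are not all spoiled, and here the surplus of colors ($2\Delta$ against degree $\le\Delta$) should give room. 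For the tight max-degree case, the blocked set has size exactly $2\Delta$ only under a very rigid configuration --- all neighbors distinctly colored, each contributing a distinct private odd color that together fill the whole list --- and I expect this rigidity to be breakable by an analogous exchange on one neighbor, reducing to an already-handled case. Making these exchanges respect properness, oddness at $v$, and oddness at every affected neighbor simultaneously is the crux of the proof.
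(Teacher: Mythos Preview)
Your reductions for $\Delta\le 2$ and the elimination of vertices of degree $\le 2$ when $\Delta\ge 3$ are correct, and your extension principle is stated accurately. But the main case --- $\Delta\ge 3$ with $\delta(G)\ge 3$ --- is left as an outline, and the outline does not close.

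Concretely: your local-exchange plan recolors one neighbor $u$ of $v$ from $a$ to some $a'$ to repair oddness at $v$, and you correctly note that $u$'s own oddness is unaffected. The danger lies with the remaining neighbors $w$ of $u$: the recoloring flips the parity of both $a$ and $a'$ in $N(w)$, which spoils $w$ precisely when $\{a,a'\}$ was its full set of odd colors. Avoiding this contributes one forbidden value of $a'$ per such $w$, so together with properness you face up to $2\Delta-1$ constraints on $a'$ --- still fine. But you must then color $v$ itself, and here the full $2d(v)$ constraints (colors of neighbors plus their protected odd colors) reassert themselves; when $d(v)=\Delta$ there is no slack, and your appeal to ``rigidity'' is a hope, not a mechanism. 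You have also not specified which $v$ to delete, nor argued that a single exchange at one neighbor genuinely reduces the blocked set for $v$ below $2\Delta$ rather than merely permuting it.

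The paper does not attempt an after-the-fact exchange. For $\Delta\ge 3$ it \emph{reserves} a color $c\in L(v_{n-1})$ at the outset, deletes $c$ from every list, and colors greedily along a BFS order $v_1,\dots,v_n$ rooted at a vertex $v_n$ of odd degree if one exists. Each $v_i$ with $i<n$ has an uncolored higher-indexed neighbor, so at most $2(\Delta-1)$ colors are blocked from the shortened $(2\Delta-1)$-list. At the root one either assigns $c$ to $v_n$ directly, or recolors $v_{n-1}$ to $c$ and gives $v_n$ a surviving list color; a short parity count handles the case where $v_n$ (and hence every vertex) has even degree. The key idea your plan lacks is this reserved color: because $c$ occurs nowhere else in the graph, assigning it to $v_{n-1}$ automatically makes $c$ an odd color (with multiplicity one) in \emph{every} neighborhood containing $v_{n-1}$. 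The exchange cannot cascade, since the new color is globally fresh.
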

\begin{proof}
First we deal with the case when $\Delta\leq2$. If $\Delta=1$ then $G=K_2$ and the assertion follows immediately as the number of vertices equals the length of each list. Assume $\Delta=2$. Thus $G$ is either a path or a cycle and every vertex is assigned with a $4$-list of colors. In case $G$ is a path, a straightforward induction shows that actually $3$-lists as enough (see also the Remark after Proposition~\ref{trees}). Indeed, simply remove a leaf $u$, take an odd coloring of $G-u$ that is compliant with the $3$-lists assignment, delete from $L(u)$ the colors used for its neighbor and second-neighbor along $G$, and then color $u$ by the remaining color in that list.

So let $G$ be a cycle. If all lists are the same, then \eqref{cycles} applies. Consider the situation when not all lists are the same. Then there are adjacent vertices $u,v$ such that $L(u)\neq L(v)$. Pick a color from $L(u)\backslash L(v)$, use it on $u$ and remove this color from all remaining lists. As shown in the previous paragraph, the path $G-\{u,v\}$ admits an odd coloring that is compliant with the resulting lists assignment (as every list is of length at least $3$). Remove from $L(v)$ the colors used in $N(v)$ or $N_2(v)$. At least one color remains on the list since the color of $u$ is not in $L(v)$, and we may use it for $v$.

\smallskip

Now we consider the case of $\Delta\geq3$.
Choose a vertex $v$ of odd degree, if possible, and otherwise of arbitrary degree $d$. Consider a breadth-first search tree rooted at $v$, and
assign indices in decreasing order  $n,n-1,\ldots,1$ to the vertices as we reach them. Thus we obtain an ordering
$v_1,v_2,\ldots, v_n$ of $V(G)$ such that:  $v_n=v$, every $v_i$ with $i<n$ has a higher-indexed neighbor, and
$N(v_n)=\{v_{n-1},\ldots,v_{n-d}\}$, where $d$ is the degree of $v$.

We color the vertices in the order $v_1,v_2,\ldots, v_n$ as follows. First we pick a color $c\in L(v_{n-1})$ and remove it from all lists. Denoting $L'(v_i)=L(v_i)\backslash\{c\}$, we have that $|L'(v_i)|\geq2\Delta-1$. For a pair of adjacent vertices $v_i$ and $v_j$ with $j<i$, we say that $v_i$ is the \textit{terminal neighbor} of $v_j$ if $v_i$ is the highest-indexed neighbor of $v_j$. For $1\leq i\leq n-1$ and vertices $v_1,v_2,\ldots,v_{i-1}$ already colored, remove from $L'(v_i)$ the colors used on the lower-indexed neighbors of $v_i$; additionally, if for any such $v_j$ the vertex $v_i$ happens to be its terminal neighbor and a unique color, say $c_j$, occurs an odd number of times in $N_{G[v_1,v_2,\ldots,v_{i-1}]}(v_j)$ then remove the color $c_j$ from $L'(v_i)$ as well. Noting that at least one color remains in $L'(v_i)$ (as we have removed at most $2(\Delta-1)$ colors), we have an available color that we use for $v_i$. We are left with assigning a color to $v_n$. Similarly to above, for any neighbor $v_j$ of $v_n$ such that a unique color $c_j$ occurs an odd number of times in $N_{G[v_1,v_2,\ldots,v_{n-1}]}(v_j)$, remove $c_j$ from $L'(v_n)$. Also, remove from $L'(v_n)$ the colors of $v_{n-d},v_{n-d+1},\ldots,v_{n-2}$. We have thus removed at most $2\Delta-1=\Delta+(\Delta-1)$ colors from $L'(v_n)$. There are two possibilities.

\smallskip

\noindent \textbf{Case 1:} \textit{A color occurs an odd number of times in $N(v_n)$.} If $c\in L(v_n)$, then color $v_n$ with $c$ and we are done. Contrarily, as $c\notin L(v_n)$, at least one color has remained in $L'(v_n)$ and we use such an available color for $v_n$. Recolor $v_{n-1}$ with $c$, and we are done.

\smallskip

\noindent \textbf{Case 2:} \textit{No color occurs an odd number of times in $N(v_n)$.} Then $d$ and $\Delta$ are even, and at most $d/2$ colors are used for $v_{n-d},v_{n-d+1},\ldots,v_{n-1}$. Therefore, as we are assuming $\Delta\geq3$, we actually have $\Delta\geq4$. Recolor $v_{n-1}$ with $c$. Note that at most $\Delta+d/2$ colors have been removed from $L'(v_n)$. Thus, as $(3/2)\Delta\leq 2\Delta-2$ holds whenever $\Delta\geq4$, a color has remained in $L'(v_n)$. We use such a color for $v_n$ and we are done.
\end{proof}

An immediate consequence of Theorem~\ref{p.2D-choosability} is the following upper bound on the odd chromatic number.

\begin{corollary} For every connected graph $G\ne C_5$, of maximum degree $\Delta$ it holds
                           $$\chi_o(G) \le  2\Delta\,.$$
\end{corollary}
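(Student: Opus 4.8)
The plan is to obtain this bound simply by specializing Theorem~\ref{p.2D-choosability} to a constant list assignment. First I would fix a palette $C=\{1,2,\ldots,2\Delta\}$ of $2\Delta$ colors and declare $L(v)=C$ for every vertex $v\in V(G)$, so that each list has exactly the required size $2\Delta$. Since $G$ is connected of maximum degree $\Delta$ (and $\Delta\ge1$, as a connected graph on at least two vertices has an edge), all hypotheses of Theorem~\ref{p.2D-choosability} are met. The theorem then produces an odd $L$-coloring $\varphi$ of $G$, that is, an odd coloring with $\varphi(v)\in C$ for all $v$, provided the single exceptional configuration does not occur.

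The only point to verify is that this exceptional configuration is avoided. Theorem~\ref{p.2D-choosability} fails to guarantee the coloring exactly when all lists coincide \emph{and} $G=C_5$; in our application the lists are by construction all equal to $C$, so the exception collapses precisely to the case $G=C_5$. As we have assumed $G\ne C_5$, the exception does not arise, and the resulting odd coloring uses only the $2\Delta$ colors of $C$, whence $\chi_o(G)\le 2\Delta$. I expect essentially no obstacle here beyond correctly matching the hypotheses: the whole content of the bound is carried by Theorem~\ref{p.2D-choosability}, and the role of excluding $C_5$ is exactly to discard the lone case where the list statement breaks down --- consistent with $\chi_o(C_5)=5>4=2\Delta$ from \eqref{cycles}.
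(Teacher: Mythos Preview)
Your proposal is correct and matches the paper's own treatment: the paper states the corollary as an immediate consequence of Theorem~\ref{p.2D-choosability}, and your specialization to constant lists $L(v)=\{1,\dots,2\Delta\}$ is precisely that deduction. Your remark that the exception in the theorem reduces exactly to $G=C_5$ under identical lists is the only thing to check, and you handle it correctly.
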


  By \eqref{cycles}, the established bound of $2\Delta$ colors is achieved for every cycle ($\neq C_5$) of length not divisible by $3$. We are not aware of any such examples when $\Delta\geq3$. Our next result sheds light on this issue in the case $\Delta=3$.
%It turns out that, excluding regular graphs, $2\Delta -1$ colors always suffice. In fact, odd coloring from $(2\Delta-1)$-lists is always possible. This is the content of our next result and the proof resembles the one of Proposition~\ref{p.2D-choosability}
%
%
%
%\begin{proposition}
%    \label{p.2D-1-choosability}
%Let $G$ be a non-regular connected graph of maximum degree $\Delta$. Every vertex $v\in V(G)$ is assigned with a color list $L(v)$ of size $2\Delta-1$. Then $G$ admits an odd coloring $\varphi$ such that $\varphi(v)\in L(v)$ for each $v$. \end{proposition}
%
%
%
%\begin{corollary}
%    \label{c.2D-1}
%Let $G$ be a non-regular connected graph of maximum degree $\Delta$. Then $\chi_o(G)\leq 2\Delta-1$.
%\end{corollary}

%Note that this bound (of $2\Delta-1$ colors) is achieved for every path of length $\geq2$. We are not aware of any such examples when $\Delta\geq3$.

\begin{proposition}
    \label{subcubic}
If $G$ is a connected graph of maximum degree $\Delta=3$, then $\chi_o(G)\leq 4$.
\end{proposition}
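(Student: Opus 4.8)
The plan is to first reduce the oddness requirement to a purely \emph{proper}-coloring condition, and then to realize such a coloring in two stages by suppressing the degree-$2$ vertices. My starting observation is that in a subcubic graph oddness is automatic at every vertex of odd degree: if $v$ is a leaf its unique neighbor contributes its color exactly once, and if $\deg(v)=3$ then the three colors seen in $N(v)$ (counted with multiplicity) sum to an odd number, so they cannot all occur an even number of times. Consequently, for a subcubic graph a proper coloring is odd \emph{if and only if} the two neighbors of every degree-$2$ vertex receive distinct colors. Thus it suffices to produce a proper $4$-coloring of $G$ in which each $2$-vertex sees two differently colored neighbors.

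Next I would pass to the \emph{skeleton} $H$ of $G$: delete all degree-$2$ vertices and, for each maximal path whose interior vertices all have degree $2$ (including length-one paths, i.e.\ ordinary edges between non-$2$-vertices), join its two end-vertices by an edge of $H$. Since $G$ is connected and has a vertex of degree $3$, no cycle is made up entirely of $2$-vertices, so every $2$-vertex lies on such a path and all end-vertices (of degree $1$ or $3$) belong to $H$; moreover $\deg_H(u)=\deg_G(u)\le 3$. A greedy coloring therefore properly $4$-colors the underlying simple graph of $H$. Because $G$ is simple, a single interposed $2$-vertex always has two \emph{distinct} neighbors, so a length-one path is a genuine edge of $H$ and its ends get different colors; likewise a path returning to its own end-vertex (a loop of $H$) has at least two interior vertices.

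Finally I would extend this coloring along each $2$-vertex path $x_0,x_1,\dots,x_k,x_{k+1}$, where the ends $x_0,x_{k+1}$ are already colored and the interior vertices $x_1,\dots,x_k$ are exactly the $2$-vertices. The constraints are precisely that every three consecutive vertices of the path be pairwise distinctly colored: properness supplies the distance-$1$ conditions, and the oddness requirement at each interior $2$-vertex supplies the distance-$2$ conditions, while the ends impose nothing extra since they have odd degree. A left-to-right greedy choice, taking each $c(x_i)$ outside $\{c(x_{i-1}),c(x_{i-2})\}$ and additionally avoiding $c(x_{k+1})$ for the last two interior vertices, always succeeds with four colors, since at most three colors are ever forbidden. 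The only configuration that could fail is a lone interior $2$-vertex ($k=1$) whose two ends share a color, and this is ruled out by the proper coloring of $H$. The resulting proper $4$-coloring satisfies the distinct-neighbor condition at every $2$-vertex, hence is odd, giving $\chi_o(G)\le 4$.

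The main obstacle I expect is not the extension step itself, which is routine once phrased as coloring a path so that consecutive triples are rainbow, but rather setting up the skeleton so that all degenerate cases are correctly handled---single $2$-vertices that force their two end-vertices apart, parallel $2$-vertex paths, and paths looping back to a single vertex---all while keeping $\Delta(H)\le 3$ so that four colors remain sufficient. The use of simplicity of $G$ and of the presence of a degree-$3$ vertex (which distinguishes this case from $C_5$) is exactly what makes these degenerate cases benign.
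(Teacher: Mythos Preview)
Your proof is correct and takes a genuinely different route from the paper's. The paper argues by minimal counter-example: it picks a $2$-vertex $v$ adjacent to a $3$-vertex, suppresses $v$ (adding the edge between its two neighbors if absent), applies minimality to the smaller subcubic graph, and observes that extending any odd $4$-coloring back to $v$ forbids at most three colors; the only way minimality can fail is if the reduced graph is $C_5$, which is then dispatched by inspection. Your approach is instead constructive: you reformulate oddness as a distance-$2$ constraint at $2$-vertices, build the skeleton $H$ by suppressing maximal $2$-vertex paths, $4$-color its simplification greedily (using $\Delta(H)\le 3$), and then thread the coloring back through each path with a three-forbidden-colors greedy. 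The paper's argument buys brevity---about six lines---and folds the single exceptional case ($C_5$) naturally into the induction. Your argument buys an explicit algorithm and a cleaner structural picture of where the obstruction sits: only the $k=1$ paths need their endpoints pre-separated, and that is exactly what the proper coloring of $H$ guarantees. Your handling of the degenerate cases (loops necessarily have at least two interior vertices by simplicity of $G$; parallel paths still force $c(x_0)\neq c(x_{k+1})$ via the single edge in the simplification; pendant paths ending at leaves behave like any other path since leaves have odd degree) is correct.
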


\begin{proof}
Consider a minimum counter-example $G$. It is not an odd graph, for otherwise $\chi_o(G)=\chi(G)\leq4$. So there is a $2$-vertex in $G$. Moreover, there must exist a $2$-vertex that is adjacent to a $3$-vertex. Say $v$ is the former, $w$ the latter, and let $u$ be the other neighbor of $v$. Delete $v$ and add an edge $uw$ unless  $u,w$ are already adjacent. If the obtained smaller graph $G'$ admits an odd $4$-coloring, then such a coloring extends to $v$ by forbidding the colors used for $u$ and $w$, and a possible third color in regard to odd occurrence in $N_{G'}(u)$ (in case $d_G(u)=2$). Hence $\chi_o(G')\geq5$. Since $G'$ is subcubic, the minimality choice of $G$ implies that $G'$ is of maximum degree $2$ and that $u,w$ were adjacent in $G$. In view of the equalities \eqref{paths} and \eqref{cycles}, we conclude that $G'=C_5$. However, then it is readily checked that $\chi_o(G)=3$, a contradiction.
\end{proof}

\section{Further work}

If $G$ is odd then obviously $\chi_o(G)=\chi(G)$. It might be interesting to look more closely into the class of graphs for which the odd and the ordinary chromatic numbers coincide. Perhaps these graphs exist in abundance, and it is NP-hard to characterize them.

\smallskip

In view of Theorem~\ref{hypercube},
one naturally wonders about odd list colorability aspects of hypercubes.

\begin{question}
Is every hypercube odd $4$-list colorable?
\end{question}

Even more so,

\begin{question}
Does the equality $\chi_o(Q_n)={\rm ch}_o(Q_n)$ hold for every $n$?
\end{question}

Recall that  for any graph $G$,  $\bar{G}$ denotes the complement of $G$, that is, the graph defined on the vertex set of $G$ so that an edge belongs to $\overline{G}$ if and only if it does not belong to $G$. Nordhaus and Gaddum~\cite{NorGad56} studied the chromatic number in a graph  and in its complement  together. They proved sharp lower and upper bounds on the sum and on the product of $\chi(G)$ and $\chi(\overline{G})$  in terms of the order $n$  of $G$. For example, they showed that $\chi(G)+\chi(\overline{G})\leq n+1$. Since then, any bound on the sum and/or the product of an invariant in a graph $G$ and the same invariant in the complement $\overline{G}$ of $G$ is called a Nordhaus-Gaddum type inequality or relation.

\begin{problem}
Let $\mathcal{G}(n)$ denote the class of graphs of order $n$. Given a positive integer $n$, determine (sharp) bounds for $\chi_o(G)+\chi_o(\overline{G})$ and $\chi_o(G)\cdot \chi_o(\overline{G})$, where $G$ ranges over the class $\mathcal{G}(n)$, and characterize the extremal graphs.
\end{problem}

Let us share some of our initial thoughts on this matter. By Nordhaus-Gaddum Theorem,  we know 
$\chi(G)\cdot \chi(\overline{G}) \ge n$, and since $\chi_o(G)  \ge \chi(G)$  it follows  that  
$\chi_o(G)\cdot  \chi_o(\overline{G}) \ge n $. This bound is attained by taking $G =  K_n$. 

%tThus, the product $\chi_o(G)\cdot \chi_o(\overline{G})$ attains minimum $n$, as it is a case with the usual chromatic number.
%First note for an odd-coloring $c$ of $G$ and an odd-coloring $\overline{c}$ of  $\overline{G}$, 
%we obtain that $(c,\overline{c})$ is an odd-coloring of $K_n$. Hence 
%$\chi_o(G)\cdot \chi_o(\overline{G})\ge \chi_o(K_n)=\chi_o(K_n) \cdot \chi_o(\overline{K_n})=n$. 

By~\eqref{b.t}, we have $\chi_o(G)\leq\chi(G)+\gamma_{t}(G)$. Consequently, $\chi_o(G)+\chi_o(\overline{G})\leq (\chi(G)+\chi(\overline{G}))+(\gamma_{t}(G)+\gamma_{t}(\overline{G}))$. As already mentioned, the first summand is bounded from above by $n+1$. In regard to the second summand, we make use of Proposition~$1.5$ in~\cite{HenYeo07}, which reads: If $G$ is a graph of order $n$ and minimum degree $\delta\geq1$ then $\gamma_{t}(G)\leq \frac{n(1+\ln\delta)}{\delta}$. Consequently, for $\min\{\delta(G),\delta(\overline{G})\}\geq10$ we have $\chi_o(G)+\chi_o(\overline{G})\leq\frac{5}{3}n$, and already for $\min\{\delta(G),\delta(\overline{G})\}\geq 6$ we get $\chi_o(G)+\chi_o(\overline{G})\leq 1.94n$. Furthermore, if $\min\{\delta(G),\delta(\overline{G})\}\to\infty$ with $n$, then we are bounded by $n+o(n)$. So for almost all graphs $G$ of order $n$, the sum $\chi_o(G)+\chi_o(\overline{G})$ is at most $n+o(n)$.  We are rather optimistic and believe that the following may hold.

\begin{conjecture}
For every graph $G\neq C_5$ of order $n$ it holds that $\chi_o(G)+\chi_o(\overline{G})\leq n+3$.
\end{conjecture}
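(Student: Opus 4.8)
The plan is to prove the final conjecture, namely that for every graph $G \neq C_5$ of order $n$ we have $\chi_o(G) + \chi_o(\overline{G}) \leq n+3$. First I would dispose of the sparse and trivial regimes by hand. The conjecture resembles the Nordhaus--Gaddum bound $\chi(G)+\chi(\overline{G}) \leq n+1$, and the obvious strategy is to route everything through the total-domination bound \eqref{b.t}: $\chi_o(G) \leq \chi(G) + \gamma_t(G)$ and symmetrically $\chi_o(\overline{G}) \leq \chi(\overline{G}) + \gamma_t(\overline{G})$. Adding these, the classical bound handles $\chi(G)+\chi(\overline{G}) \leq n+1$, so the entire difficulty collapses to controlling the \emph{total-domination sum}
\begin{equation*}
\gamma_t(G) + \gamma_t(\overline{G}) \leq 2.
\end{equation*}
This inequality cannot hold in general (it fails badly for sparse graphs), so the real content is that $\gamma_t$ is \emph{tiny for at least one of} $G, \overline{G}$. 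The heuristic is that a graph and its complement cannot both be sparse: if $G$ has a vertex of high degree, that vertex and one neighbor often totally dominate $G$; if instead $G$ is everywhere sparse, then $\overline{G}$ is dense and a single vertex together with a neighbor will totally dominate $\overline{G}$.

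The key intermediate step I would isolate is therefore a Nordhaus--Gaddum-type bound for the total domination number itself, of the shape $\gamma_t(G) + \gamma_t(\overline{G}) \leq \text{const}$ once $n$ is large and both $G,\overline{G}$ are isolate-free. In fact one should aim to show that for $n$ sufficiently large, $\min\{\gamma_t(G), \gamma_t(\overline{G})\} = 2$ and the other is bounded, so that the sum is at most a small constant; the slack against $n+1$ from the chromatic side then absorbs it and yields $n+3$ comfortably. To establish such a bound I would argue by cases on the maximum degree. If $\Delta(G) \geq n - c$ for a suitable constant, pick a near-universal vertex $x$ and a neighbor $y$: then $\{x,y\}$ dominates all but the few non-neighbors of $x$, which I must mop up with a bounded number of extra vertices. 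If $\Delta(G) \leq n - c$ for every vertex, then every vertex is non-adjacent to at least $c$ others, so $\delta(\overline{G}) \geq c-1$ is large; here I would invoke the Henning--Yeo bound $\gamma_t(H) \leq n(1+\ln\delta)/\delta$ already cited in the paper, but that only gives $o(n)$, not a constant, so a sharper two-vertex domination argument in $\overline{G}$ is needed. The cleanest route is a direct combinatorial claim: if $\delta(\overline{G})$ is large then a random or greedily chosen pair of adjacent vertices in $\overline{G}$ totally dominates, giving $\gamma_t(\overline{G})=2$.

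For the small-order cases ($n$ below the threshold where the asymptotic arguments kick in) and for the exceptional graph $C_5$ (where $\chi_o(C_5)=5$ and $\overline{C_5}=C_5$, forcing the sum to $10 = n+5 > n+3$, which is precisely why $C_5$ is excluded), I would verify the bound by finite inspection or by slightly more careful case analysis, since the total-domination estimates degrade when $n$ is small. I expect the main obstacle to be twofold. First, the additive constant: the chromatic part already costs $n+1$, leaving a budget of only $2$ for the total-domination sum, which is extremely tight and almost certainly \emph{false} for the crudely combined bound; so one likely cannot simply add \eqref{b.t} for both $G$ and $\overline{G}$ and must instead exploit that when $\chi(G)+\chi(\overline{G})$ is close to $n+1$ the structure of $G$ is very constrained (near-complete), forcing $\gamma_t$ small there. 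In other words, the two summands in the naive bound are \emph{anticorrelated}, and capturing this anticorrelation quantitatively—rather than bounding each piece independently—is the crux. Second, pinning down the extremal configurations and confirming $n+3$ (rather than $n+2$ or $n+4$) as the exact constant will require identifying the tight examples, which I would expect to be graphs built so that both $G$ and $\overline{G}$ are near-complete yet each forces one even-degree vertex needing an extra dominating color.
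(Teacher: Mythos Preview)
The statement is posed in the paper as an open \emph{conjecture}; the paper supplies no proof, only the remark that the total-domination route yields $\chi_o(G)+\chi_o(\overline{G}) \leq n+o(n)$ for almost all $G$, together with the tight example $G=K_{n-5}\vee C_5$. There is thus no paper proof against which to compare.

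Your outline is not a proof and contains a structural gap along the very line you propose. For any graph $H$ without isolated vertices one has $\gamma_t(H) \geq 2$, since no vertex totally dominates itself; hence $\gamma_t(G)+\gamma_t(\overline{G}) \geq 4$ whenever both sides are isolate-free, and adding the two instances of \eqref{b.t} to the Nordhaus--Gaddum bound can never beat $n+5$. Your target $\gamma_t(G)+\gamma_t(\overline{G}) \leq 2$ is therefore not merely hard for sparse graphs---it is impossible in every instance. The fallback idea of exploiting ``anticorrelation'' between the chromatic sum and the domination sum is left entirely unspecified, and the paper already follows precisely this additive decomposition and halts at $n+o(n)$; a genuinely new ingredient beyond \eqref{b.t} would be required to reach $n+3$. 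Your guess at the extremal structure is also off: in the paper's tight example the complement $\overline{G}$ is a $C_5$ together with $n-5$ isolated vertices, about as far from near-complete as one can get.
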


If true, the conjectured bound is best possible for every $n\geq6$. Indeed, the graph $G=K_{n-5}\vee C_5$ satisfies that $\chi_o(G)=n-2$ and $\chi_o(\overline{G})=5$. (The latter equality follows from the fact that $G$ can be obtained from $K_n$ by removing the edges of a $5$-cycle.)

\bigskip

In view of Proposition~\ref{subcubic}, we are tempted to end this paper with the following.

\begin{conjecture}
    \label{conj.D+1}
If $G$ is a connected graph of maximum degree $\Delta\geq3$, then $\chi_o(G)\leq \Delta+1$.
\end{conjecture}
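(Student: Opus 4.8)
The plan is to argue by a minimal counterexample and to exploit one structural observation that drastically narrows the task. Note first that if $\deg(v)$ is odd then \emph{every} proper coloring already places some color an odd number of times in $N(v)$: the sizes of the color classes meeting $N(v)$ sum to the odd number $\deg(v)$, so they cannot all be even. Hence the parity condition is automatic at every odd-degree vertex, and an odd coloring is nothing but a proper coloring in which each \emph{even}-degree vertex sees some color an odd number of times. In particular, if $G$ is odd the claim is immediate from Brooks' theorem, since the only connected graph of maximum degree $\Delta\ge 3$ whose chromatic number reaches $\Delta+1$ is $K_{\Delta+1}$, whose rainbow coloring is trivially odd. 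So a minimal counterexample $G$ is not odd, is distinct from $K_{\Delta+1}$, and possesses a vertex of positive even degree.

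Next I would reduce low-degree vertices exactly as in the proofs of Propositions~\ref{p.outerplanar} and~\ref{subcubic}. A $1$-vertex is removed and re-extended trivially. For a $2$-vertex $v$ with neighbors $x,y$, delete $v$ and add the edge $xy$ if it is absent; this does not raise the maximum degree, so minimality supplies an odd $(\Delta+1)$-coloring of the smaller graph. When extending to $v$, the only constraints are properness against $c(x),c(y)$ and---thanks to the observation above---repairing oddness solely at those of $x,y$ that have even degree; a short parity check shows at most four colors are forbidden, which for $\Delta\ge 4$ leaves a free color. The degenerate outcomes in which the reduced graph has maximum degree $2$ are settled by~\eqref{paths} and~\eqref{cycles} (the delicate $C_5$ case occurs only for $\Delta=3$, already covered by Proposition~\ref{subcubic}). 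Thus it suffices to treat $G$ with $\delta(G)\ge 3$ and $\Delta\ge 4$.

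For the core case I would start from a Brooks proper $\Delta$-coloring $c$ (available since $G\neq K_{\Delta+1}$ and $\Delta\ge 4$) and try to repair, using the single spare color $\Delta+1$, the set $B$ of even-degree vertices at which every color occurs an even number of times. The natural device is to select a set $I$ to recolor with color $\Delta+1$ so that each bad vertex acquires an odd number of neighbors in $I$; color $\Delta+1$ then appears an odd number of times in its neighborhood, so such a vertex becomes good irrespective of the other colors. This turns the problem into finding $I$ realizing a prescribed parity pattern $|N(w)\cap I|\equiv 1\ (\mathrm{mod}\ 2)$ on the bad even-degree vertices, subject to $I$ being independent (for properness) and to not creating new bad vertices---a constrained linear system over $\mathrm{GF}(2)$ that one would solve by linear algebra and then correct for independence and side effects via localized Kempe-chain swaps.

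The hard part will be precisely this repair step. Unlike the bound $2\Delta$ of Theorem~\ref{p.2D-choosability}, the single spare color of $\Delta+1$ leaves essentially no slack---$K_{\Delta+1}$ shows the bound is tight---so repairs must not cascade: recoloring a vertex to fix one even-degree vertex perturbs the neighborhood parities of all its neighbors and can spoil previously satisfied ones, and confining the recolored set to odd-degree vertices fails as soon as some even-degree vertex has only even-degree neighbors. Controlling this interaction globally (for instance, showing the $\mathrm{GF}(2)$ parity system is always solvable within an independent set, or devising a vertex ordering in the spirit of Theorem~\ref{p.2D-choosability} that reserves, at each even-degree vertex, the capacity to install an odd color) is where the real work lies; it is plausible that, as in the planar setting, a discharging argument is needed to isolate reducible configurations, though the absence of sparsity for general $\Delta$ makes this considerably more demanding.
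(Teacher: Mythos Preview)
The statement you are attempting to prove is posed in the paper as a \emph{conjecture}; the paper offers no proof of it and explicitly leaves it open (Proposition~\ref{subcubic} settles only $\Delta=3$, and Theorem~\ref{p.2D-choosability} gives the weaker bound $2\Delta$). There is therefore no ``paper's proof'' to compare against.

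Your proposal is not a proof either, and you acknowledge this. The preliminary reductions are sound: the observation that odd-degree vertices are automatically satisfied is correct, the odd-graph case follows from Brooks, and the $1$- and $2$-vertex reductions mirror those in Propositions~\ref{p.outerplanar} and~\ref{subcubic} and go through for $\Delta\ge 4$ with $\Delta+1$ colors. But the entire content of the conjecture lies in what you call the ``core case'' ($\delta\ge 3$, $\Delta\ge 4$), and there your sketch stops at a wish: find an independent set $I$ meeting a prescribed $\mathrm{GF}(2)$ parity pattern on the bad vertices while creating no new bad vertices. You give no argument that such an $I$ exists, and you correctly identify the obstruction---recoloring one vertex perturbs all its neighbors' parities, the independence constraint is nonlinear, and there is no sparsity to drive a discharging argument. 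As written, this is a reasonable plan of attack on an open problem, not a proof; the gap is exactly the step you flag as ``where the real work lies.''
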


\bigskip
\noindent
{\bf Acknowledgements.}
This work is partially supported by ARRS Program P1-0383 and ARRS Project J1-3002.

\bibliographystyle{plain}

\begin{thebibliography}{99}
\bibitem{AasAkb20} A.~Aashtab, S.~Akbari, M.~Ghanbari and A.~Shidani, \textit{Vertex partitioning of graphs into odd induced subgraphs}, Discus. Math. Graph Theory, doi:10.7151/dgmt.2371
\bibitem{AppHak77} K.~Appel and W.~Haken, \textit{The solution of the Four-Color Map Problem}, Sci. Amer. $\mathbf{237}$ (1977) 108--121.
\bibitem{Abe64} O.~Aberth, \textit{On the sum of graphs}, Rev. Fr. Rech. Op\'{e}r. $\mathbf{33}$ (1964) 353--358.
\bibitem{AtaPetSkr16} R.~Atanasov, M.~Petru\v{s}evski and R.~\v{S}krekovski, \textit{Odd edge-colorability of subcubic graphs}, Ars Math. Contemp. $\mathbf{10}$ (2016), 359--370.
\bibitem{BonMur08} J.~A.~Bondy and U.~S.~R.~Murty, \textit{Graph Theory}, Graduate Texts in Mathematics, Springer, New York $\mathbf{244}$ (2008).
\bibitem{BunMilWesWu07} D.~P.~Bunde, K.~Milans, D.~B.~West and H.~Wu, \textit{Parity and strong parity edge-coloring of graphs}, Congr. Numer. $\mathbf{187}$ (2007) 193--213.

\bibitem{FMS} Z. Fil\'akov\'a, P. Mih\'ok, G. Semani\v sin,
{\em A note on maximal $k$-degenerate graphs}, Math. Slovaca  {\bf 47} (1997) 489--498.
%\bibitem{Che09} P.~Cheilaris, \textit{Conflict-Free Coloring}, PhD Thesis (City University of New York, 2009).
\bibitem{CheKesPal13} P.~Cheilaris, B.~Keszegh and D.~P\'{a}lv\"{o}lgyi, \textit{Unique-maximum and conflict-free colouring for hypergraphs and tree graphs}, SIAM J. Discrete Math $\mathbf{27}$ (2013) 1775--1787.
%\bibitem{CheTot11} P.~Cheilaris and G.~T\'{o}th, \textit{Graph unique-maximum and conflict-free coloring}, J. Discrete Algorithms $\mathbf{9}$ (2011) 241--251.
%\bibitem{EveLotRonSmo03} G.~Even, Z.~Lotker, D.~Ron and S.~Smorodinsky, \textit{Conflict-free colorings of simple geometric regions with applications to frequency assignment in cellular networks}, SIAM J. Comput. $\mathbf{33}$ (2003) 94--136.
\bibitem{FabGor16} I.~Fabrici and F.~G\"{o}ring, \textit{Unique-maximum coloring of plane graphs}, Dicussiones Mathematicae Graph Theory $\mathbf{36}$ (2016) 95--102.
%\bibitem{GleSzaTar14} R.~Glebov, T.~Szab\'{o} and G.~Tardos, \textit{Conflict-free colorings of graphs}, Combin. Probab. Comput. $\mathbf{23(3)}$ (2014) 434--448.
\bibitem{HenYeo07} M.~A.~Henning and A.~Yeo, \textit{A transition from total domination in graphs to transversals in hypergraphs}  Quaestiones Mathematicae $\mathbf{30}$ (2007) 417--436.
\bibitem{HoyTho16} A.~Hoyer and R.~Thomas, \textit{The Gy\"{o}ri-Lov\'{o}sz Theorem}, arXiv:1605.01474v2 (2016)
\bibitem{KanKatVar18} M.~Kano, G.~Y.~Katona and K.~Varga, \textit{Decomposition of a graph into two disjoint odd subgraphs}, Graphs and Combin. $\mathbf{34}(6)$ (2018) 1581--1588.
%\bibitem{KosKumLuc12} A.~V.~Kostochka, M.~Kumbhat and T.~{\L}uczak, \textit{Conflict-free colorings of uniform hypergraphs with few edges}, Combin. Probab. Comput. $\mathbf{21}$ (2012) 611--622.
\bibitem{LuzPetSkr15} B.~Lu\v{z}ar, M.~Petru\v{s}evski and R.~\v{S}krekovski, \textit{Odd edge coloring of graphs}, Ars Math. Contemp. $\mathbf{9}$ (2015)  277--287.
\bibitem{LuzPetSkr18} B.~Lu\v{z}ar, M.~Petru\v{s}evski and R.~\v{S}krekovski, \textit{On vertex-parity edge-colorings}, J. Combin. Optim. $\mathbf{35}(2)$ (2018) 373--388.
%\bibitem{PacTar09} J.~Pach and G.~Tardos, \textit{Conflict-free colorings of graphs and hypergraphs}, Combin. Probab. Comput. $\mathbf{18}$ (2009) 819--834.
\bibitem{NorGad56} E.~A.~Nordhaus and J.~Gaddum, \textit{On complementary graphs}, Amer. Math. Mothly $\mathbf{63}$ (1956) 175--177.
\bibitem{Pet15} M.~Petru\v{s}evski, \textit{A note on weak odd edge-colorings of graphs}, Adv. Math. Sci. Journal $\mathbf{4}(1)$ (2015) 7--10.
\bibitem{Pet18} M.~Petru\v{s}evski, \textit{Odd $4$-edge-colorability of graphs}, J. Graph Theory $\mathbf{87}(4)$ (2018) 460--474.
\bibitem{PetSkr21} M.~Petru\v{s}evski and R.~\v{S}krekovski, \textit{Odd decompositions and coverings of graphs}, European J. Combin. $\mathbf{91}$ (2021) 103225.
\bibitem{RobSanSeyTho96} N.~Robertson, D.~P.~Sanders, P.~D.~Seymour and R.~Thomas, \textit{A New Proof of the Four Colour Theorem}, Electron. Res. Announc. Amer. Math. Soc. \textbf{2} (1996) 17--25.
%\bibitem{Smo13} S.~Smorodinsky, \textit{Conflict-free coloring and its applications}, In: I.~B\'{a}r\'{a}ny, K.~J.~B\"{o}r\"{o}czky, G.~F.~,~T\'{o}th and J.~Pach (eds) Geometry-Intuitive, Discrete and Convex. Bolyai Society Mathematical Studies, $\mathbf{24}$ Springer, Berlin, Heidelberg (2013).
\bibitem{PetSkr22} M.~Petru\v{s}evski and R.~\v{S}krekovski, \textit{Colorings with neighborhood parity condition}, preprint arXiv:2112.13710v2.
\bibitem{Viz63} V.~G.~Vizing, \textit{The Cartesian product of graphs}, Vy\v{c}. Sis. $\mathbf{9}$ (1963) 30--43.
\end{thebibliography}

\end{document}